\newcommand{\OA}{\mathbf{OA}}
\newcommand{\wt}{\mathbf{wt}}
\newcommand{\setF}{\mathbb{F}}
\newcommand{\setZ}{\mathbb{Z}}
\newcommand{\setR}{\mathbb{R}}
\newcommand\cP{{\mathcal P}}
\newcommand\cL{{\mathcal L}}
\newcommand\cV{{\mathcal V}}
\newcommand\cB{{\mathcal B}}
\newtheorem{theorem}{Theorem}[section]
\newtheorem*{theorem*}{Theorem}
\newtheorem*{maintheorem*}{Main Theorem}
\newtheorem{lemma}[theorem]{Lemma}
\newtheorem{proposition}[theorem]{Proposition}
\newtheorem{corollary}[theorem]{Corollary}
\newtheorem{conjecture}[theorem]{Conjecture}
\newtheorem*{conjecture*}{Conjecture}
\newtheorem{question}[theorem]{Question}
\newtheorem{asses}{Assumptions}
\begin{document}
\title[MMS Conjectures on Partial Geom.]{Manickam-Mikl{\'o}s-Singhi Conjectures on Partial Geometries} 

\author{Ferdinand Ihringer} 
\address{Einstein Institute of Mathematics,\\ 
Hebrew University of Jerusalem, Givat Ram, Jerusalem, 9190401, Israel}
\email{Ferdinand.Ihringer@uregina.ca}
\thanks{The first author acknowledges support from a PIMS Postdoctoral Fellowship.}
 
\author{Karen Meagher}
\address{Department of Mathematics and Statistics,\newline
University of Regina, 3737 Wascana Parkway, S4S 0A4 Regina SK, Canada}
\email{karen.meagher@uregina.ca} 
\thanks{Research supported in part by an NSERC Discovery Research Grant,
    Application No.:  RGPIN-341214-2013.}

\begin{abstract}
  In this paper we give a proof of the Manickam-Mikl{\'o}s-Singhi
  (MMS) conjecture for some partial geometries. Specifically, we give
  a condition on partial geometries which implies that the MMS conjecture
  holds. Further, several specific partial geometries that are
  counter-examples to the conjecture are described.
\end{abstract}

\subjclass[2000]{Primary 05B25, 51E20, 05E30}

\maketitle

\section{Introduction}

In this paper we consider a version of the Manickam-Mikl{\'o}s-Singhi
conjecture (MMS conjecture) for partial geometries. This conjecture was
originally made in 1988, and, although recently substantial progress has been
made on the conjecture, the original statement is still open.

\begin{conjecture*}[{Manickam, Mikl{\'o}s, and Singhi \cite{Manickam1988,Manickam1988a}}]
  Let $f$ be a function from $\{ 1, \ldots, n \}$ to $\setR$ such that
  $\sum_{i=1}^n f(i)=0$.  Let $Y$ be the family of $k$-sets $y$ of $\{1, \ldots, n \}$ such
  that $\sum_{i \in y} f(i) \geq 0$.  If $n \geq
  4k$, then
  \begin{align*}
   |Y| \geq \binom{n-1}{k-1}.
  \end{align*}
  If equality holds, then $Y$ is the set of all $k$-subsets which contain a given element.
\end{conjecture*}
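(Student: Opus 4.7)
The plan is to reduce any admissible $f$ to a canonical extremal form without increasing $|Y|$, and then verify the bound on that form directly. The obvious extremal candidate is the ``star'' weighting $f^\star$ given by $f^\star(1) = n-1$ and $f^\star(i) = -1$ for $i > 1$; here $\sum_i f^\star(i) = 0$ and the corresponding $Y$ is precisely the collection of $\binom{n-1}{k-1}$ many $k$-subsets containing $1$, matching the stated bound with equality and realizing the uniqueness clause.

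For the reduction I would first sort so that $f(1) \geq \cdots \geq f(n)$, and then look for a local shifting move: a perturbation of $f$ that preserves $\sum_i f(i) = 0$, reduces the number of distinct values taken by $f$, and does not increase $|Y|$. Concretely, given three consecutive levels of $f$, one can try to collapse the middle level onto one of its neighbors while compensating the resulting sum defect elsewhere, then argue that at least as many $k$-sets fall out of $Y$ as come in. Iterating such moves should leave $f$ two-valued: $f = a > 0$ on some set $A$ of size $p$ and $f = b < 0$ on its complement, subject to $pa + (n-p)b = 0$.

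In this two-valued regime, a $k$-set $y$ lies in $Y$ iff $|y \cap A| \geq kp/n$, so
\[ |Y| = \sum_{j \geq \lceil kp/n \rceil} \binom{p}{j}\binom{n-p}{k-j}. \]
The problem is then reduced to the purely numerical claim that this sum is at least $\binom{n-1}{k-1}$ for each $1 \leq p \leq n-1$ when $n \geq 4k$, with equality only at $p=1$. I would expect Vandermonde identities together with a monotonicity analysis in $p$ to suffice; the hypothesis $n \geq 4k$ should enter to ensure that when $p$ is moderately large the relevant binomial tail remains substantial enough to dominate $\binom{n-1}{k-1}$.

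The hard part is the shifting step. The map $f \mapsto Y$ is piecewise constant, but the faces along which it jumps are hyperplanes of the form $\sum_{i \in y} f(i) = 0$ that depend delicately on the combinatorics of $f$. A move that looks monotone when one examines a single $k$-set can simultaneously flip the sign of many other $k$-sets in an uncontrolled direction, so an individual local move does not obviously decrease $|Y|$. Designing a reduction that actually works down to the sharp threshold $n \geq 4k$ is precisely where the conjecture has resisted proof for over three decades: the known unconditional results attain only thresholds of order $k^2$ or, more recently, $ck$ for a large absolute constant $c$. For this reason my outline is best viewed as a blueprint rather than a complete argument, and a full solution likely requires either new structural information about $f$ near the threshold, or a global (rather than local) shifting mechanism.
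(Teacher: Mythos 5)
You should be aware that the statement you set out to prove is presented in the paper as the original Manickam--Mikl\'os--Singhi \emph{conjecture}: the paper gives no proof of it, and at the sharp threshold $n\ge 4k$ it remains open. The paper only cites the partial results of Alon, Huang and Sudakov ($n\ge 33k^2$), Chowdhury, Sarkis and Shahriari ($n\ge 8k^2$) and Pokrovskiy ($n\ge 10^{47}k$), and then proves analogues for partial geometries under quite different hypotheses. So there is no proof in the paper to compare yours against, and any complete argument you produced would be a major new result rather than a reproof.

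Your outline has a genuine gap, which to your credit you identify yourself: the reduction of an arbitrary zero-sum weighting to a two-valued one by local ``level-collapsing'' moves that never increase $|Y|$. This is exactly the step that is not known to exist. The set $Y$ depends on the sign pattern of all $\binom{n}{k}$ sums $\sum_{i\in y}f(i)$, and a single perturbation of one level of $f$ crosses many of the hyperplanes $\sum_{i\in y}f(i)=0$ at once, with no control on the net change in $|Y|$; unlike the compression arguments for Erd\H{o}s--Ko--Rado, there is no known monotone shifting scheme here, and the hard instances of the conjecture (e.g.\ weightings with $k-1$ large positive weights and many small negative ones) are precisely not two-valued, so a reduction to the two-valued case would already contain the essential difficulty. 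The terminal computation you propose, namely that $\sum_{j\ge\lceil kp/n\rceil}\binom{p}{j}\binom{n-p}{k-j}\ge\binom{n-1}{k-1}$ for $1\le p\le n-1$ and $n\ge 4k$, is the comparatively easy half and is indeed the kind of estimate that appears in the literature on special cases, but on its own it does not touch the conjecture. As written, the proposal is a heuristic blueprint, not a proof.
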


The MMS conjecture can be seen as a weighted version of the famous
Erd\H{o}s-Ko-Rado theorem (EKR theorem).

\begin{theorem*}[Erd\H{o}s, Ko, and Rado {\cite{ErdHos1961}}]
  If $n \geq 2k$, then a family $Y$ of pairwise intersecting $k$-subsets of $\{1, \ldots, n\}$ satisfies
  \begin{align*}
    |Y| \leq \binom{n-1}{k-1}.
  \end{align*}
  If equality holds and $n > 2k$, then $Y$ is the set of all $k$-subsets which contain a given element.
\end{theorem*}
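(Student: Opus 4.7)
The plan is to adapt Katona's cyclic permutation argument. Consider the $(n-1)!$ cyclic orderings of $\{1,\ldots,n\}$. For a cyclic ordering $\sigma$, call a $k$-subset an \emph{arc} of $\sigma$ if its elements appear consecutively along $\sigma$. Each cyclic ordering has exactly $n$ arcs of length $k$, and by symmetry each $k$-subset arises as an arc of exactly $k!(n-k)!$ cyclic orderings. I will double-count the pairs $(\sigma,A)$ for which $\sigma$ is a cyclic ordering and $A\in Y$ is an arc of $\sigma$.

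The crux is a purely local lemma: if $n\geq 2k$, then within a single cyclic ordering any pairwise intersecting family of length-$k$ arcs has size at most $k$. To prove it, fix one such arc $A_0$ and relabel so $A_0=\{1,\ldots,k\}$. The remaining arcs meeting $A_0$ start at positions in $\{2,\ldots,k\}\cup\{n-k+2,\ldots,n\}$. For each $i=1,\ldots,k-1$, pair the arc starting at $1+i$ with the arc starting at $n-k+1+i$; the hypothesis $n\geq 2k$ ensures the two arcs in each pair are disjoint, so any pairwise intersecting family uses at most one arc per pair. Together with $A_0$ this gives at most $k$ arcs.

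Feeding this lemma into the double count yields
\[
k!(n-k)!\cdot|Y|\leq k\cdot(n-1)!,
\]
which rearranges to $|Y|\leq\binom{n-1}{k-1}$.

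The main obstacle is the uniqueness assertion for $n>2k$: the counting bound alone does not pin down the extremal structure. For equality, every cyclic ordering must realize the extremal case of the lemma. When $n>2k$ that extremal case is rigid: the $k$ intersecting arcs in a cyclic ordering must form the pencil of all arcs through one fixed position. Comparing two cyclic orderings that differ by a single transposition forces them to share the same distinguished element, so one fixed $x\in\{1,\ldots,n\}$ is pinned down globally, giving $Y=\{A : x\in A\}$.
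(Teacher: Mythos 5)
This statement is the classical Erd\H{o}s--Ko--Rado theorem, which the paper quotes purely as background with a citation to \cite{ErdHos1961}; it gives no proof of its own, so there is nothing internal to compare against. Your argument is Katona's cycle (circle) method, and it is essentially correct: the counts ($n$ arcs per cyclic order, each $k$-set an arc of exactly $k!(n-k)!$ of the $(n-1)!$ cyclic orders) are right, and the local lemma with the pairing of the arc starting at $1+i$ against the arc starting at $n-k+1+i$ is exactly the standard disjointness pairing, valid precisely when $n\geq 2k$. This is a genuinely different route from the proof in the cited source, which proceeds by compression (shifting) and induction on $n$; Katona's method buys a shorter, more transparent proof of the bound, at the cost of making the uniqueness statement the delicate part. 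That is also where your write-up is thinnest: you assert but do not prove that for $n>2k$ a family of $k$ pairwise intersecting length-$k$ arcs on an $n$-cycle must be the pencil through a single position (this is true --- writing the non-$A_0$ arcs as ``starts at $1+i$'' versus ``ends at $i$'' and using pairwise intersection forces the chosen index sets to be an initial segment and its complement, whence a common point --- but it needs the argument), and the final ``compare two cyclic orders differing by a transposition'' step needs care, since a transposition near the distinguished position does change which $k$-sets are arcs through it. Neither gap is fatal, but both would have to be filled for a complete proof of the equality case.
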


A 2012 breakthrough paper by Alon, Huang, and Sudakov~\cite{Alon2012}
confirmed the conjecture for $n \geq 33k^2$.  More recently,
Chowdhury, Sarkis, and Shahriari \cite{Chowdhury2013} showed that the
MMS conjecture is true for $n \geq 8k^2$.  A linear bound was given by
Pokrovskiy~\cite{Pokrovskiy2013}, who proved that the MMS conjecture
holds, provided that $n \geq 10^{47}k$.

A family of $k$-sets from $\{ 1, \ldots, n \}$ is equivalent to a
$k$-uniform hypergraph.  Assume that there is a function $f: \{ 1, \ldots, n \}
\rightarrow \setR$ such that $\sum_{i=1}^n f(i)=0$, and say that an edge $y$ in
the hypergraph is nonnegative if $\sum_{i \in y} f(i) \geq 0 $. If the number of
nonnegative edges in a hypergraph is at least the minimal degree of a
vertex, then we say that the \textit{MMS star property} holds for the
hypergraph. Pokrovskiy's technique in~\cite{Pokrovskiy2013} relies on the
existence of hypergraphs with the MMS star property. A first step in
improving and simplifying his result is to show that the MMS star
property holds for more hypergraphs. The most general result known to
the authors on the MMS conjecture for hypergraphs is due to Huang and
Sudakov who proved the following.

\begin{theorem*}[Huang and Sudakov \cite{Huang2014}]\label{thm:huangsudakov}
  Let $H$ be a $k$-uniform hypergraph on $n$ vertices with constant codegrees.
  If $n > 10k^3$, then $H$ has the MMS star property.
\end{theorem*}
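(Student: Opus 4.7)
The plan is to first observe that constant codegree $\lambda$ makes $H$ a $2$-$(n,k,\lambda)$ design, so every vertex has the common degree $d=\lambda(n-1)/(k-1)$ and the minimum degree equals $d$. The task thus reduces to exhibiting at least $d$ edges $y$ with $f(y):=\sum_{v\in y}f(v)\ge 0$ whenever $\sum_v f(v)=0$.

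Two identities from the design structure will drive the argument. Swapping summations gives, for each vertex $v$,
\[
\sum_{y\ni v}f(y)=d\,f(v)+\lambda\sum_{u\ne v}f(u)=(d-\lambda)f(v),
\]
and the incidence matrix relation $AA^{T}=(d-\lambda)I+\lambda J$ yields
\[
\sum_{y\in E}f(y)=0,\qquad \sum_{y\in E}f(y)^{2}=(d-\lambda)\sum_{v}f(v)^{2},
\]
so the edge-weight vector has mean zero and a controlled second moment.

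Next, I would sort $f(v_1)\ge\cdots\ge f(v_n)$ and focus on the star of $v_1$. The first identity says the $d$ edges through $v_1$ have average weight $(1-\lambda/d)f(v_1)\ge 0$, so a naive averaging already produces one nonnegative edge. To lift this to $d$ nonnegative edges globally, I would argue by contradiction: if fewer than $d$ edges are nonnegative, then the negative edges carry essentially all of the $L^{1}$-mass, and by Cauchy-Schwarz against the second-moment identity they would collectively contribute too much variance unless $f$ itself is sharply concentrated on a few vertices. In the concentrated regime, one can either use the star identity at $v_1$ directly (the contribution of atypical vertices is easily controlled because each pair sits in exactly $\lambda$ edges) or invoke the classical MMS theorem on all $k$-subsets, which holds for $n\ge 8k^2$ by Chowdhury-Sarkis-Shahriari and hence certainly for $n>10k^3$, and transfer the count to $E(H)$ using the codegree regularity.

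The main obstacle will be this quantitative transfer from the complete $k$-uniform hypergraph to $H$. Although constant codegrees equate the first and second moments of edge-weights in $H$ with their complete-hypergraph analogues, the edges of $H$ form a vanishing fraction of $\binom{V}{k}$, so naive probabilistic arguments (such as counting random $k$-subsets of an ordering) will not deliver edges of $H$ automatically. The cubic threshold $n>10k^3$, as opposed to the quadratic threshold available for complete hypergraphs, presumably arises from the slack needed to absorb this sparsity and the accompanying $k$-factor in the pointwise bound $|f(y)|\le k\|f\|_\infty$; calibrating this trade-off against the target count $d=\lambda(n-1)/(k-1)$ is the central technical difficulty.
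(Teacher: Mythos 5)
This statement is an external result of Huang and Sudakov, quoted in the paper purely for context; the paper contains no proof of it, so there is no internal argument to compare your proposal against. Judged on its own terms, your opening is sound: constant codegree $\lambda$ does make $H$ a $2$-$(n,k,\lambda)$ design, the star identity $\sum_{y\ni v}f(y)=(d-\lambda)f(v)$ is correct (and is the exact analogue of Lemma~2.1 of this paper for partial geometries), and the first and second moment identities for the edge weights follow correctly from $AA^{T}=(d-\lambda)I+\lambda J$. These are indeed the natural starting points.

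However, the proposal has a genuine gap: the contradiction argument is never actually carried out. The dichotomy ``either the negative edges carry too much variance or $f$ is concentrated'' is not made quantitative, and in the concentrated regime your fallback --- invoking the classical MMS theorem for all $k$-subsets (valid for $n\ge 8k^{2}$) and ``transferring the count to $E(H)$ using codegree regularity'' --- cannot work as described. The complete-hypergraph theorem guarantees $\binom{n-1}{k-1}$ nonnegative $k$-subsets, but $H$ contains only $\lambda\binom{n}{2}/\binom{k}{2}$ edges, a vanishing fraction of $\binom{n}{k}$, and there is no mechanism forcing any of the guaranteed nonnegative $k$-sets to be edges of $H$; codegree regularity controls second moments, not which individual $k$-sets lie in $H$. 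Likewise, mean zero and bounded variance of the edge weights alone cannot rule out a configuration with few nonnegative edges, since the many negative edges could each be only slightly negative. Closing this requires the kind of careful neighbourhood counting around the heaviest lines that Huang and Sudakov perform (and that Section~3 of this paper adapts to partial geometries); as written, the central step of the proof is missing.
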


In 1988 Bier and Delsarte introduced the \textit{$i$-th distribution
  invariant} of an \textit{association scheme}
\cite{Bier1984,Bier1988}; this concept was essential in the
development of the MMS conjecture. In this paper we consider a version of the MMS conjecture for partial 
geometries. This corresponds to the first distribution invariant of the strongly 
regular graph associated to a partial geometry

The main result of Section~\ref{sec:generalbounds} yields the following
result.

\begin{theorem}\label{thm:mainbound}
  Let $(\cP,\cL)$ be a partial geometry of order $(s, t, \alpha)$. 
  Let $f$ be a function from $\cP$ to $\setR$ such that $\sum_{p \in \cP} f(p)=0$.
  Let $Y$ be the elements $y$ of $\cL$ such that $\sum_{p \in y} f(p) \geq 0$.
  Then there exists a function 
  $g(s, \alpha) \in o(\sqrt{s\alpha^2})$ such that if 
  \begin{align*}
   t > s\alpha + \sqrt{2s\alpha^2} + g(s, \alpha),
  \end{align*}
  then $|Y| \geq t+1$, i.e. $(\cP, \cL)$ has the MMS star property.
\end{theorem}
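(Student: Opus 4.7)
The plan is to argue by contradiction. Suppose $|Y|\leq t$. Let $N$ be the $\cP\times\cL$ point-line incidence matrix and set $F:=N^{T}f$, so $F(L)=\sum_{p\in L}f(p)$ and $Y=\{L:F(L)\geq 0\}$. The point graph of a $pg(s,t,\alpha)$ is strongly regular with adjacency $A$ having eigenvalues $s(t+1)$, $s-\alpha$, and $-(t+1)$, and $NN^{T}=(t+1)I+A$. Since $\sum_p f(p)=0$, decompose $f=f_1+f_2$ into the two non-trivial eigenspaces of $A$. The $-(t+1)$-eigenspace of $A$ coincides with $\ker(NN^{T})$, so $N^{T}f_{2}=0$; hence $F=N^{T}f_{1}$ and
\[
  \|F\|^{2} \;=\; f^{T}NN^{T}f \;=\; (t+s+1-\alpha)\,\|f_1\|^{2}.
\]
Since $f_2$ has no effect on $F$ (nor on $Y$) and $\sum_p f_1(p)=0$ automatically, one may replace $f$ by $f_1$; henceforth assume $f=f_1$.

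Setting $Z:=\cL\setminus Y$ and $S:=\sum_{L\in Y}F(L)=-\sum_{L\in Z}F(L)\geq 0$, Cauchy--Schwarz applied on $Y$ and on $Z$ separately yields
\[
  S^{2}\Bigl(\tfrac{1}{|Y|}+\tfrac{1}{|Z|}\Bigr)\;\leq\;\sum_{L}F(L)^{2}\;=\;(t+s+1-\alpha)\|f\|^{2},
\]
so, using $|Y|\leq t$, one obtains the upper bound $S^{2}\leq t\,(t+s+1-\alpha)\|f\|^{2}\cdot(|Z|/|\cL|)$. To close the argument one seeks a matching lower bound of the shape $S\geq h(s,\alpha,t)\,\|f\|$; combining the two inequalities then forces a quadratic inequality in $t$ whose resolution should yield exactly $t>s\alpha+\sqrt{2s\alpha^{2}}+g(s,\alpha)$ with $g\in o(\sqrt{s\alpha^{2}})$ absorbing lower-order slack from estimates such as $\|f_1\|\leq\|f\|$ and $|Z|/|\cL|\leq 1$.

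The main obstacle is producing this lower bound on $S$. The plan is to normalize $\|f\|=1$ and pick a point $p^{*}$ with $|f(p^{*})|$ near-maximal, after possibly passing to $-f$ so that $f(p^{*})>0$, and then exploit the star of $t+1$ lines through $p^{*}$, which must make a sizeable positive contribution to $S$. The partial-geometry axiom -- every point off a line $L$ is joined to $L$ by exactly $\alpha$ lines -- bounds how efficiently the negative values of $f$ can cluster on the $s(t+1)$ points collinear with $p^{*}$ so as to cancel that contribution. This is precisely where the parameter $\alpha$ enters the threshold, combining with $s$ to produce the main terms $s\alpha$ and $\alpha\sqrt{2s}$; the technical heart of the argument is the careful combinatorial bookkeeping needed to make this lower bound tight up to the $o(\sqrt{s\alpha^{2}})$ correction.
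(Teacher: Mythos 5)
The spectral setup in your first two paragraphs is correct: for the point graph of a $pg(s,t,\alpha)$ one indeed has $NN^{T}=(t+1)I+A$, the eigenvalue $-(t+1)$ of $A$ gives the kernel of $N^{T}$, and the identity $\|F\|^{2}=(t+s+1-\alpha)\|f_{1}\|^{2}$ together with the two-sided Cauchy--Schwarz bound $S^{2}\bigl(\tfrac{1}{|Y|}+\tfrac{1}{|Z|}\bigr)\leq\|F\|^{2}$ is valid. But the proof stops exactly where the theorem begins: you state yourself that ``the main obstacle is producing this lower bound on $S$,'' and what follows is a plan, not an argument. Nothing in the proposal establishes any inequality of the form $S\geq h(s,\alpha,t)\|f\|$, and without it no contradiction is reached and no threshold on $t$ is derived. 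The claim that the resulting quadratic ``should yield exactly'' $t>s\alpha+\sqrt{2s\alpha^{2}}+g(s,\alpha)$ is asserted, not computed.

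Moreover, I doubt the sketched route can close as set up. Your upper bound reads $S\lesssim\sqrt{t(t+s+1-\alpha)}\,\|f\|\approx t\|f\|$, so a contradiction requires a lower bound of the same order. Since each line satisfies $|F(L)|\leq\sqrt{s+1}\,\|f\|$ and $|Y|\leq t$, this would force the average nonnegative line to carry weight $\Omega(\|f\|)$, i.e.\ essentially the full $\ell_{2}$-mass of $f$ on every nonnegative line. The star through a point $p^{*}$ of near-maximal $|f(p^{*})|$ cannot deliver this: the maximum of $|f|$ over the $n=(s+1)(st+\alpha)/\alpha$ points is only guaranteed to be of order $\|f\|/\sqrt{n}$, far too small. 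The normalization is the crux of the difficulty: the paper normalizes by the \emph{maximum line weight}, setting $\wt(\ell_{1})=1$, and then proves (Lemma~\ref{lem:bnd_pos_weight_01}) that the total nonnegative weight $P$ satisfies $t-\alpha+1<P\leq t$ -- a statement that has no useful translation into the $\|f\|=1$ scale. The paper's actual proof is entirely elementary and local: it uses only the identity $(t-\alpha)\wt(\ell)=\sum_{s\sim\ell}\wt(s)$ of Lemma~\ref{lem:evaluebound} (itself the combinatorial shadow of your eigenvalue computation), then analyzes the two heaviest lines $\ell_{1},\ell_{2}$, shows they meet in a point $p$, bounds the number and weights of nonnegative lines off $p$ and negative lines on $p$, and extracts the quadratic inequality in $t$ in Proposition~\ref{prop:nonmms_implies_small_t}. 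If you want to pursue the global second-moment route you would need a genuinely new mechanism for the lower bound on $S$; as written, the argument has a gap at its technical heart.
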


In general, the result by Huang-Sudakov~\cite{Huang2014} and our main
theorem cover different hypergraphs, but they correspond exactly for
partial geometries of order $(s, t, s+1)$. In this case, the result by
Huang-Sudakov yields the condition $t > 10s\alpha = 10s(s+1)$, which is slightly
worse than our condition. For $\alpha \neq s+1$ partial geometries,
seen as hypergraphs, have the codegrees $0$ and $1$, so
our objects are not handled in \cite{Huang2014}.

For all the mentioned MMS theorems it is interesting to determine the
tightness of the conditions.  We provide many examples of partial
geometries, for example some $2$-$(v, k, 1)$ designs, orthogonal arrays and
Latin squares, where the MMS star property does not hold. Furthermore,
one example in \cite{Huang2014}, which does not satisfy the MMS star
property, relies on the existence of Mersenne prime numbers, which
might mean that there are only finitely many of them. One of our
counterexamples covers that case and shows that counterexamples exist
for all prime powers.

\section{Preliminaries}\label{sec:prelim}

In this paper our goal is to consider versions of the MMS conjecture
for partial geometries.  A partial geometry is an incidence structure
$(\cP, \cL)$ where $\cL$ is a set of lines comprised of points from
the set $\cP$. The incidence structure $(\cP, \cL)$ must meet the
following conditions.

\begin{enumerate}[i.]
\item Each point is contained in the same number of lines. This is
  called the \textsl{replication number}, and it is denoted by $t+1$.
\item Every line contains the same number of points. This is the
\textsl{size} of the line, and it is denoted by $s+1$.
\item For any point $p$ not contained on a line $\ell$, the number of
  lines that contain $p$ and contain a point that is on $\ell$, does
  not depend on the choice of $\ell$ and $p$; we call this number
  $\alpha$.
\item Every pair of distinct points lies on at most one line.
\end{enumerate}
We refer to \cite[Section 1]{DeClerck1995} for an introduction to partial geometries.

Throughout this paper, $(\cP, \cL)$ denotes a partial geometry.
We say that $(\cP, \cL)$ is a partial geometry with parameters $(s,t,\alpha)$.
We say that two points are \textit{adjacent} if they lie on a common line,
and we say that two lines are \textit{adjacent} if they share a common point.

If $\alpha = 0$, then lines of the partial geometry are
disjoint. Such partial geometries are considered trivial and
trivially have the MMS star property. We will not consider
such partial geometries.

It is well known (and can be determined by a simple counting
argument~\cite[Section 5.6]{Godsil2015}) that
\[
| \cP | =\frac{(st+\alpha)(s+1)}{\alpha}, \qquad | \cL | =\frac{(st+\alpha)(t+1)}{\alpha}.
\]
For convenience, we will use $n$ for the number of points in $\cP$ and with this
notation
\[
\alpha = \frac{ts(s+1)}{n-s-1}.
\]

The set of all lines through a fixed point is called a
\textsl{star}; the size of a star is the replication number, $t+1$. We
say the partial geometry has the \textsl{EKR star property} if the
largest set of pairwise intersecting lines is a star. Moreover, the partial
geometry has the \textsl{strict EKR star property} if the stars are
the only sets of intersecting lines of maximum size. It is known that
any partial geometry has the EKR star property \cite[Section 5.6]{Godsil2015}.

For the MMS conjecture on a partial geometry, we assume that there is
a weighting that assigns a weight to each point in the partial
geometry. If the sum of all the weights on the points is zero (or
nonnegative), then the weighting is a zero-sum (or nonnegative)
weighting.  The weight of a line is defined to be the sum of the
weights of all the points on the line. The weight of a line $\ell$ is
denoted by $\wt(\ell)$.  We call a point or line with nonnegative (or
positive, or negative) weight a nonnegative (or positive, or negative)
point or line.

A set of lines in which each line has nonnegative weight is called an
\textsl{MMS set} for the weighting. The MMS conjecture predicts that
for any nonnegative weighting, the size of the largest MMS set is at
least the size of a star. Rather than considering the MMS conjecture
for all nonnegative weightings on the points, it is sufficient to only
consider the zero-sum weightings. We say a partial geometry has the
\textsl{MMS star property} if for any zero-sum weighting of the
points, the number of lines with positive weight is at least the
number of lines through a single point. Further, the partial geometry
has the \textsl{strict MMS star property} if for any zero-sum
weighting there exists a point for which all the lines through it have
nonnegative weight. We will show that any partial geometry, in which
$t$ is sufficient large relative to $s$ and $\alpha$, has the MMS star
property; that is, the MMS conjecture holds for these partial
geometries. First we will give an easy condition for when a partial
geometry has the MMS star property.


We define a
\textsl{spread} of a partial geometry $(\cP, \cL)$ to be a set of
disjoint lines such that each point belongs to exactly one of the lines in
the spread. A spread contains exactly $\frac{st+\alpha}{\alpha}$
lines. If it is possible to partition $\cL$
into disjoint spreads, then there will be exactly $t+1$
spreads in the partition. 
Any partial geometry that can be partitioned into disjoint spreads
trivially has both the MMS star property and the EKR star
property. This is well known, see~\cite[remarks following Theorem
4.5]{Manickam1988a}. 
This proof gives no indication if the partial geometry has either
the strict EKR star property or the strict MMS star property.

We want to determine conditions on a partial geometry to guarantee
that it also has the MMS star property.
An important tool for this is the following observation.

\begin{lemma}\label{lem:evaluebound}
For any line $\ell$ in a partial geometry with a zero-sum weighting
\begin{align}\label{eq:basicevalue}
(t-\alpha) \wt(\ell) = \sum_{s \sim \ell} \wt(s).
\end{align}
\end{lemma}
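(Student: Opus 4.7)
The plan is to prove the identity by a straightforward double-counting argument, using the defining axioms of a partial geometry to evaluate how often each point gets counted. I would expand the right-hand side by writing each line weight as a sum over its points and then interchange the order of summation:
\begin{align*}
\sum_{m \sim \ell} \wt(m) = \sum_{m \sim \ell} \sum_{p \in m} \wt(p) = \sum_{p \in \cP} \wt(p) \cdot N(p),
\end{align*}
where $N(p)$ denotes the number of lines adjacent to $\ell$ passing through $p$. (I use $m$ as the dummy variable rather than $s$ to avoid collision with the parameter $s$ of the geometry.)

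The key step is then evaluating $N(p)$ according to whether $p$ lies on $\ell$. If $p \in \ell$, every other line through $p$ meets $\ell$ at $p$ and is thus adjacent to $\ell$; since $p$ lies on $t+1$ lines in total and adjacency is between distinct lines, this yields $N(p) = t$. If $p \notin \ell$, then axiom (iii) in the definition of a partial geometry asserts exactly $N(p) = \alpha$ lines through $p$ meeting $\ell$.

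Substituting these two values and then invoking the zero-sum hypothesis in the form $\sum_{p \notin \ell} \wt(p) = -\wt(\ell)$ gives
\begin{align*}
\sum_{m \sim \ell} \wt(m) = t \sum_{p \in \ell} \wt(p) + \alpha \sum_{p \notin \ell} \wt(p) = t \cdot \wt(\ell) - \alpha \cdot \wt(\ell) = (t - \alpha)\wt(\ell),
\end{align*}
as required. There is no genuine obstacle here; the only point worth flagging is the convention that a line is not considered adjacent to itself, which the stated formula already forces (otherwise one would obtain $(t + 1 - \alpha)\wt(\ell)$ instead). The identity will later play the role of an eigenvalue-type relation: it realises $\wt$, viewed as a function on lines, as an eigenvector with eigenvalue $t-\alpha$ of the adjacency operator of the line graph, up to the zero-sum projection.
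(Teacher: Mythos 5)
Your proof is correct and is essentially the paper's own argument: the authors likewise count that each point on $\ell$ lies on $t$ adjacent lines and each point off $\ell$ lies on $\alpha$ adjacent lines, then use $\sum_{p \notin \ell} \wt(p) = -\wt(\ell)$ from the zero-sum hypothesis. Your write-up just makes the interchange of summation explicit where the paper leaves it implicit.
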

\begin{proof}
 Each point on $\ell$ lies on $t$ lines adjacent to $\ell$. Each point
 not on $\ell$ lies on $\alpha$ lines adjacent to $\ell$. From the definition
 of $\wt(\ell)$ it follows that
 \begin{align*}
    &\sum_{p \in \ell} \wt(p) = \wt(\ell) = -\sum_{p \notin \ell} \wt(p).
 \end{align*}
 This implies the assertion.
\qed \end{proof}

Lemma~\ref{lem:evaluebound} alone can be used to show that the
generalized quadrangles have the MMS star property.  The
\textsl{generalized quadrangles} are the partial geometries in which
$\alpha = 1$.

\begin{theorem}\label{thm:quads}
The generalized quadrangles have the MMS star property.
\end{theorem}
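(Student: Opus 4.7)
The plan is to exploit Lemma~\ref{lem:evaluebound} in the special case $\alpha = 1$, which states that $(t-1)\wt(\ell) = \sum_{\ell' \sim \ell} \wt(\ell')$, and to choose $\ell$ to be a line of \emph{maximum} weight. If all weights on lines are zero, then all lines are nonnegative and we are done, so we may assume there is a line $\ell^*$ with $\wt(\ell^*) > 0$ (the sum of all line weights is $(t+1) \sum_{p \in \cP} \wt(p) = 0$, so positive weights are forced as soon as the weighting is nontrivial).

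With $\ell^*$ of maximum weight in hand, let $b$ denote the number of nonnegative lines adjacent to $\ell^*$. I split the sum in Lemma~\ref{lem:evaluebound} over $\ell^*$ according to the sign of $\wt(\ell')$. Each nonnegative term is bounded above by $\wt(\ell^*)$ (by maximality), so the nonnegative part contributes at most $b \,\wt(\ell^*)$; the negative part contributes strictly less than $0$ (or exactly $0$ if there are no negative adjacent lines, in which case $b = t(s+1) \geq t$ and we are already finished). Substituting these estimates into the identity yields
\begin{equation*}
(t-1)\wt(\ell^*) < b\, \wt(\ell^*),
\end{equation*}
and dividing by the positive quantity $\wt(\ell^*)$ gives $b \geq t$. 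Adding $\ell^*$ itself produces $t+1$ distinct nonnegative lines.

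The main thing to be careful about is the edge case where the maximum line weight happens to equal $0$ or where $t = 1$; in each such case one checks by hand that the sum $\sum_{\ell' \sim \ell^*} \wt(\ell') = 0$ forces enough adjacent lines to be nonnegative. Otherwise there is no genuine obstacle: the argument is essentially a one-line application of Lemma~\ref{lem:evaluebound} to a maximum-weight line, and it uses only that $\alpha = 1$ (so that the eigenvalue $t-\alpha = t-1$ is small relative to the degree $t(s+1)$ of the line graph).
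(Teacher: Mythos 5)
Your argument is correct and is essentially the paper's own proof: both take a maximum-weight line $\ell^*$, apply Lemma~\ref{lem:evaluebound} with $\alpha=1$, dispose of the case where no adjacent line is negative, and then bound the nonnegative part of the sum by $b\,\wt(\ell^*)$ to conclude $b\geq t$ (the paper merely normalizes $\wt(\ell^*)=1$ first instead of dividing at the end). The edge cases you flag are already absorbed by your opening reduction, so nothing further is needed.
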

\begin{proof}
  Assume that $\ell$ is the line with the highest weight in a
  generalized quadrangle and that $\wt(\ell) = 1$. By
  Lemma~\ref{lem:evaluebound}, the sum of the weights of the lines
  adjacent to $\ell$ is $t-1$. We can assume that at least one of the
  lines adjacent to $\ell$ is negative (otherwise the result clearly
  holds).  So the sum of the weights of the nonnegative lines adjacent
  to $\ell$ must be strictly larger than $t-1$.  Since the weights of
  the lines are no more than $1$, there are at least $t$ lines
  adjacent to $\ell$ with nonnegative weight. These lines, together
  with $\ell$, form a set of $t+1$ lines with nonnegative weight.
\qed \end{proof}

\section{General bounds for partial geometries}
\label{sec:generalbounds}

In this section, we will prove our main theorem. 
We will assume that there is a zero-sum weighting on the points of
$(\cP, \cL)$. We can scale the weighting so that
the largest weight of a line is $1$. 
As Theorem~\ref{thm:quads} covers the case $\alpha =1 $, we
assume that $\alpha>1$.  
Let $\wt(\ell)$ denote the weight of a line.
Let $\ell_1$ denote the line with the highest weight and let $\ell_2$
denote the line with the second highest weight.
We will assume that the MMS
conjecture does not hold, so we assume that
there are at most $t$ lines (since the size of a star is $t+1$) with
nonnegative weight. We will show that this implies an upper bound
on $t$; hence if the bound is not met, then the partial geometry has the MMS
star property.

Note that the condition $ t > s\alpha + \sqrt{2s\alpha^2} + g(s, \alpha)$ in
Theorem~\ref{thm:mainbound} implies that $t > s\alpha
+ 3\alpha - 4$. Lemma~\ref{lem:evaluebound} implies that we have at least $2$
positive lines. Hence, we can make the following additional assumptions.
\begin{asses}\label{assumptions}
\begin{enumerate}[(i.)]
\item $P$ is the sum of the weights of the nonnegative lines;
\item $t > s\alpha + 3\alpha - 4$;
\item we have at least $2$ positive lines.
\end{enumerate}
\end{asses}

\begin{lemma}\label{lem:bnd_neg_weight_01}
Let $\ell$ be a negative line in $(\cP,\cL)$. Then the following holds.
\begin{enumerate}[(a)]
 \item 
 \begin{align*}
    (t-\alpha +1) \wt(\ell) - \sum_{ \stackrel{s \sim \ell}{\wt(s)\geq 0} } \wt(s) \geq -P.
  \end{align*}
 \item The line $\ell$ is not adjacent to all nonnegative lines.
\end{enumerate}
\end{lemma}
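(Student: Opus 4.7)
The plan is to derive both parts from Lemma~\ref{lem:evaluebound} combined with one simple global identity. Since the weighting is zero-sum on points and each point lies on exactly $t+1$ lines, double counting incidences yields $\sum_{\ell'\in\cL}\wt(\ell')=(t+1)\sum_{p\in\cP}\wt(p)=0$. Splitting this sum by sign immediately gives $\sum_{\wt(\ell')<0}\wt(\ell')=-P$, which is the only global fact needed below.

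For part (a), I would use Lemma~\ref{lem:evaluebound} to write
$\sum_{s\sim\ell,\,\wt(s)\geq 0}\wt(s)=(t-\alpha)\wt(\ell)-\sum_{s\sim\ell,\,\wt(s)<0}\wt(s)$
and substitute this into the left-hand side of the claimed inequality. The expression collapses to $\wt(\ell)+\sum_{s\sim\ell,\,\wt(s)<0}\wt(s)$. The latter sum runs over a subset of the negative lines other than $\ell$ itself, and since all its terms are negative it is bounded below by the total negative mass with the contribution of $\ell$ removed, namely $-P-\wt(\ell)$. Adding $\wt(\ell)$ produces the required bound $-P$.

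For part (b), I would argue by contradiction: if $\ell$ were adjacent to every nonnegative line, then $\sum_{s\sim\ell,\,\wt(s)\geq 0}\wt(s)=P$, and substituting this into part (a) reduces the statement to $(t-\alpha+1)\wt(\ell)\geq 0$. Assumption~\ref{assumptions}(ii) says $t>s\alpha+3\alpha-4$, and since $\alpha\geq 2$ this forces $t-\alpha+1>0$. Hence $\wt(\ell)\geq 0$, contradicting the hypothesis that $\ell$ is a negative line.

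The only step requiring any care is the bookkeeping around $\ell$ itself: because $\ell$ is not counted among its own neighbours but is itself a negative line, one has to subtract $\wt(\ell)$ when replacing the sum of weights of negative lines adjacent to $\ell$ by the sum over \emph{all} negative lines. Apart from that, both parts are direct algebraic consequences of Lemma~\ref{lem:evaluebound} and the induced zero-sum condition on lines, so no substantive obstacle is expected.
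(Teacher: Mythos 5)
Your proof is correct and takes essentially the same route as the paper: both parts rest on Lemma~\ref{lem:evaluebound} combined with the observation that the line weights sum to zero (each point lying on $t+1$ lines), so the negative lines contribute $-P$ in total, and (b) follows because $t-\alpha+1>0$ forces $(t-\alpha+1)\wt(\ell)<0$ for a negative line. If anything, your bookkeeping of the nonnegative neighbours of $\ell$ is more explicit than the paper's one-line display, which drops that sum before asserting (a).
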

\begin{proof}
By Lemma~\ref{lem:evaluebound},
\begin{align*}
  (t-\alpha) \wt(\ell)   &= \sum_{s \sim \ell} \wt(s) \geq \sum_{\stackrel{s \neq \ell}{\wt(s) < 0}} \wt(s) = -P-\wt(\ell).
\end{align*}
This shows (a). Recall that we assume $t+1 > \alpha$, so $(t-\alpha +1) \wt(\ell) < 0$. This implies (b).
\qed \end{proof}

Together with our assumption $\wt(\ell_1) = 1$, Lemma \ref{lem:evaluebound} has two more helpful consequences.
\begin{lemma}\label{lem:bnd_pos_weight_01}
  We have $t-\alpha +1 <  P \leq t$.  \hfill{}\qed
 \end{lemma}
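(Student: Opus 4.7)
The plan is to prove the two inequalities essentially independently, both arising from Lemma~\ref{lem:evaluebound} applied at the maximum-weight line $\ell_1$ together with the MMS-failing assumption that there are at most $t$ nonnegative lines.

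The upper bound $P \leq t$ is immediate: every nonnegative line has weight at most $\wt(\ell_1) = 1$ by our normalization, and by the MMS-failing assumption there are at most $t$ such lines.

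For the lower bound, I would apply Lemma~\ref{lem:evaluebound} at $\ell_1$, and use $\wt(\ell_1) = 1$, to obtain
\begin{align*}
t - \alpha = \sum_{s \sim \ell_1} \wt(s).
\end{align*}
Discarding the (non-positive) contributions of negative lines adjacent to $\ell_1$, and then enlarging the remaining sum to range over \emph{all} nonnegative lines other than $\ell_1$, bounds the right-hand side above by $P - \wt(\ell_1) = P - 1$. This yields the non-strict form $P \geq t - \alpha + 1$.

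The one substantive point is upgrading this to strict inequality. If equality held throughout, then in particular no line adjacent to $\ell_1$ could have strictly negative weight; that is, each of the $(s+1)t$ lines adjacent to $\ell_1$ (using axiom~(iv) to ensure that the $t$ further lines through each of the $s+1$ points of $\ell_1$ are all distinct) would be nonnegative. Together with $\ell_1$ itself this produces at least $(s+1)t + 1 > t$ nonnegative lines, contradicting the MMS-failing hypothesis of at most $t$. I expect this short counting step to be the only mildly non-trivial part; the rest is bookkeeping from Lemma~\ref{lem:evaluebound} and the normalization.
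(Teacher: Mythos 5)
Your proof is correct and follows the route the paper intends: the upper bound comes from counting at most $t$ nonnegative lines each of weight at most $1$, and the lower bound from Lemma~\ref{lem:evaluebound} applied at $\ell_1$ (the paper states this lemma with no written proof, as an immediate consequence of that identity and the normalization $\wt(\ell_1)=1$). Your explicit justification of the strictness --- that equality would force all $(s+1)t$ distinct neighbours of $\ell_1$ to be nonnegative, which together with $\ell_1$ exceeds the assumed bound of $t$ nonnegative lines --- correctly supplies a detail the paper glosses over.
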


\begin{lemma}\label{lem:nhood_largest}
  If $\ell$ is a nonnegative line with $\wt(\ell') \leq \wt(\ell)$ for all lines $\ell'$ adjacent to $\ell$,
  then $\ell$ is adjacent to at least $t-\alpha + 1$ nonnegative
  lines. \hfill{}\qed
\end{lemma}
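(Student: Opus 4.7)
The plan is to read off the bound directly from Lemma~\ref{lem:evaluebound} by exploiting the assumption that $\wt(\ell)$ dominates every adjacent weight. By Lemma~\ref{lem:evaluebound},
\begin{align*}
  (t-\alpha)\wt(\ell) = \sum_{\ell' \sim \ell} \wt(\ell').
\end{align*}
Split the right-hand side as $A - B$, where $A \geq 0$ is the sum of the weights of the $N$ nonnegative adjacent lines and $-B \leq 0$ is the sum of the weights of the negative adjacent lines. The hypothesis $\wt(\ell') \leq \wt(\ell)$ immediately gives $A \leq N \wt(\ell)$, and rearranging yields
\begin{align*}
  N \wt(\ell) \geq (t-\alpha)\wt(\ell) + B.
\end{align*}

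I would then split into two cases according to whether $\ell$ has any strictly negative neighbor. If no adjacent line is negative, then $N$ equals the total number $t(s+1)$ of lines adjacent to $\ell$, which comfortably exceeds $t - \alpha + 1$ since $s \geq 1$ and $\alpha \geq 1$. Otherwise $B > 0$, and I first argue $\wt(\ell) > 0$: if instead $\wt(\ell) = 0$, the displayed identity forces $A = B$, but the hypothesis $\wt(\ell') \leq 0$ kills $A$, contradicting $B > 0$. With $\wt(\ell) > 0$ established, dividing the displayed inequality by $\wt(\ell)$ gives $N > t - \alpha$, and integrality of $N$, $t$, and $\alpha$ promotes this to $N \geq t - \alpha + 1$.

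The main obstacle, if any, is purely bookkeeping; no new combinatorial ingredient is needed beyond Lemma~\ref{lem:evaluebound} and the local-maximum hypothesis. The only subtle point is ruling out $\wt(\ell) = 0$ in the presence of a negative neighbor, which follows from the identity as above.
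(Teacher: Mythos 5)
Your proof is correct and is exactly the argument the paper intends: the lemma is stated with no written proof precisely because it follows from Lemma~\ref{lem:evaluebound} together with the local-maximum hypothesis, by the same averaging argument used in Theorem~\ref{thm:quads}. Your extra care in ruling out the degenerate case $\wt(\ell)=0$ (and the case with no negative neighbour) fills in the details the authors left implicit, and does so correctly.
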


\begin{lemma}\label{lem:bnd_second_largest}
  The weight of $\ell_2$ is at least $\frac{t-\alpha}{t-1}$. 
\end{lemma}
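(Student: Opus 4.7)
The plan is to apply Lemma~\ref{lem:evaluebound} directly to $\ell_1$ and exploit the fact that $\wt(\ell_1) = 1$ has been normalized to the maximum. Plugging in,
\begin{align*}
 t - \alpha = (t-\alpha)\wt(\ell_1) = \sum_{s \sim \ell_1} \wt(s).
\end{align*}
The strategy is to split this sum into its nonnegative and negative parts, upper-bound each, and then solve for $\wt(\ell_2)$.

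Since the contribution from negative adjacent lines is at most $0$, we have
\begin{align*}
 t - \alpha \leq \sum_{\stackrel{s \sim \ell_1}{\wt(s) \geq 0}} \wt(s).
\end{align*}
Now I would use the standing assumption (that the MMS conjecture fails on $(\cP,\cL)$) which says that the total number of nonnegative lines is at most $t$. Since $\ell_1$ itself is one such line, at most $t-1$ other lines are nonnegative, so the sum above has at most $t-1$ terms. By definition of $\ell_2$, each of these terms is bounded above by $\wt(\ell_2)$.

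Combining these observations yields $t - \alpha \leq (t-1)\wt(\ell_2)$, which rearranges to the desired inequality $\wt(\ell_2) \geq \frac{t-\alpha}{t-1}$. The entire argument is just careful bookkeeping once Lemma~\ref{lem:evaluebound} is in hand; there is no real obstacle, other than keeping track of whether $\ell_1$ should or should not be counted among the nonnegative lines under consideration. Note that Lemma~\ref{lem:nhood_largest} is not needed here—the upper bound $t-1$ on the number of nonnegative lines other than $\ell_1$ comes purely from the assumed failure of the MMS star property.
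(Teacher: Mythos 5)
Your proof is correct and follows essentially the same route as the paper: the paper first records $P \geq t-\alpha+1$ (Lemma~\ref{lem:bnd_pos_weight_01}, itself just Lemma~\ref{lem:evaluebound} applied to $\ell_1$), subtracts the weight $1$ of $\ell_1$, and averages the remaining total $t-\alpha$ over the at most $t-1$ other nonnegative lines; you simply inline that step by applying Lemma~\ref{lem:evaluebound} to $\ell_1$ directly and bounding each of the at most $t-1$ nonnegative neighbours by $\wt(\ell_2)$. The bookkeeping is the same, and your observation that Lemma~\ref{lem:nhood_largest} is not needed matches the paper.
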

\begin{proof}
  We have assumed that there are no more than $t$ lines with
  nonnegative weight and that the highest weight of a line is $1$.  By
  Lemma~\ref{lem:bnd_pos_weight_01}, the sum of the weights of all
  nonnegative lines is at least $t-\alpha+1$. So the average weight of
  all nonnegative lines, except the highest weight, is at least
  $\frac{t-\alpha}{t-1}$.
\qed \end{proof}

\begin{corollary}\label{cor:ell1_ell2_adj}
  The lines $\ell_1$ and $\ell_2$ meet in a point.
\end{corollary}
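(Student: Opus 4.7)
My plan is to argue by contradiction, assuming that $\ell_1$ and $\ell_2$ share no point. First I would note that Lemma~\ref{lem:bnd_second_largest}, combined with assumption~(ii) (which in particular forces $t>\alpha$), gives $\wt(\ell_2)\geq\frac{t-\alpha}{t-1}>0$, so $\ell_2$ is in fact a positive line. Lemma~\ref{lem:nhood_largest} then applies to $\ell_1$ trivially (since $\ell_1$ has maximum weight), and it also applies to $\ell_2$: because $\ell_1\not\sim\ell_2$ by assumption, every line adjacent to $\ell_2$ is different from $\ell_1$ and therefore has weight at most $\wt(\ell_2)$. Consequently each of $\ell_1$ and $\ell_2$ has at least $t-\alpha+1$ nonnegative neighbors; write $N^+(\ell_i)$ for these two sets.

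The crux of the argument will be to count the common neighborhood $N(\ell_1)\cap N(\ell_2)$ exactly. For each of the $s+1$ points $p$ of $\ell_1$, the defining axiom of the parameter $\alpha$ produces exactly $\alpha$ lines through $p$ meeting $\ell_2$, and $\ell_1$ is \emph{not} among them precisely because $\ell_1\not\sim\ell_2$. Since any line meeting both $\ell_1$ and $\ell_2$ meets $\ell_1$ in exactly one point, summing over the points of $\ell_1$ yields
\begin{align*}
|N(\ell_1)\cap N(\ell_2)| = (s+1)\alpha.
\end{align*}

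To finish I would assemble the nonnegative lines. Since a line is not adjacent to itself and $\ell_1\not\sim\ell_2$, the pair $\{\ell_1,\ell_2\}$ is disjoint from $N^+(\ell_1)\cup N^+(\ell_2)$, so inclusion-exclusion gives a total of at least
\begin{align*}
2 + 2(t-\alpha+1) - (s+1)\alpha = 2t + 4 - (s+3)\alpha
\end{align*}
nonnegative lines. But the standing assumption allows at most $t$ nonnegative lines, forcing $t \leq (s+3)\alpha - 4 = s\alpha+3\alpha-4$, which directly contradicts assumption~(ii). The main obstacle I anticipate is the common-neighborhood count: one must use disjointness of $\ell_1$ and $\ell_2$ at every point of $\ell_1$ to argue that $\ell_1$ itself does not consume one of the $\alpha$ slots, so that the clean constant $(s+1)\alpha$ appears rather than a weaker bound that would fail to cancel against assumption~(ii).
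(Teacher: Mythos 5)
Your proposal is correct and follows essentially the same route as the paper: apply Lemma~\ref{lem:nhood_largest} to both $\ell_1$ and $\ell_2$, use the fact that two non-adjacent lines have exactly $(s+1)\alpha$ common neighbours, and derive $t \leq s\alpha+3\alpha-4$ by inclusion--exclusion, contradicting the standing assumption on $t$. The extra details you supply (why Lemma~\ref{lem:nhood_largest} applies to $\ell_2$ when $\ell_1\not\sim\ell_2$, and the point-by-point derivation of the $(s+1)\alpha$ count) are correct and merely make explicit what the paper leaves implicit.
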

\begin{proof}
  Suppose that $\ell_1$ and $\ell_2$ are non-adjacent. 
  By Lemma \ref{lem:nhood_largest}, $\ell_1$ and $\ell_2$ are each
  adjacent to at least $t-\alpha+1$ nonnegative lines. Two non-adjacent
  lines in have exactly $(s+1)\alpha$ common
  neighbours. Thus the number of nonnegative lines that intersect at
  least one of $\ell_1$ or $\ell_2$ is at least $2(t-\alpha+1) -
  (s+1)\alpha$. But there are at most $t-2$ nonnegative lines, excluding
  $\ell_1$ and $\ell_2$, in the partial geometry, so
 \begin{align*}
    2(t-\alpha+1) - (s+1)\alpha \leq t-2.
  \end{align*}
  This is a contradiction.
\qed \end{proof}

In the following we denote the point $\ell_1 \cap \ell_2$ by $p$.

\begin{lemma}\label{lem:bnd_nhood_second_largest}
  The line with the second highest
  weight is adjacent to at least $t - \alpha$ nonnegative lines.
\end{lemma}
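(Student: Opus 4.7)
The plan is to mimic the argument behind Lemma~\ref{lem:nhood_largest} for $\ell_2$, treating the single adjacent line $\ell_1$---whose weight may exceed $\wt(\ell_2)$---as an additive exception, and then invoking Lemma~\ref{lem:bnd_second_largest} to rule out the failure case.

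Concretely, let $k$ denote the number of nonnegative lines adjacent to $\ell_2$; the aim is to show $k \geq t-\alpha$. By Corollary~\ref{cor:ell1_ell2_adj} we have $\ell_1 \sim \ell_2$, so $\ell_1$ is one of these $k$ lines. Every line of $\cL \setminus \{\ell_1\}$ has weight at most $\wt(\ell_2)$ by the definition of $\ell_2$, and negative neighbours contribute nonpositively to the sum. Combined with Lemma~\ref{lem:evaluebound}, this would give
\begin{align*}
  (t-\alpha)\wt(\ell_2) \;=\; \sum_{s \sim \ell_2} \wt(s) \;\leq\; \wt(\ell_1) + (k-1)\wt(\ell_2) \;=\; 1 + (k-1)\wt(\ell_2),
\end{align*}
which I would rearrange as $(t-\alpha-k+1)\,\wt(\ell_2) \leq 1$.

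To conclude I would argue by contradiction: suppose $k \leq t-\alpha-1$. Then the coefficient $t-\alpha-k+1 \geq 2$, forcing $\wt(\ell_2) \leq 1/2$. On the other hand, Lemma~\ref{lem:bnd_second_largest} gives $\wt(\ell_2) \geq (t-\alpha)/(t-1)$, which exceeds $1/2$ precisely when $t > 2\alpha - 1$. Assumptions~\ref{assumptions} impose $t > s\alpha + 3\alpha - 4$; together with $s \geq 1$ and $\alpha \geq 2$ (the case $\alpha = 1$ is already handled by Theorem~\ref{thm:quads}), this yields $t > 4\alpha - 4 \geq 2\alpha - 1$, and the resulting contradiction forces $k \geq t - \alpha$.

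The only point requiring a little care is pinpointing $\ell_1$ as the sole possible exception among the neighbours of $\ell_2$ and making sure it appears with the correct multiplicity in the upper bound; after that, the remainder is a short numerical verification that the hypothesis on $t$ keeps $\wt(\ell_2)$ strictly above $1/2$, so no deeper obstacle is expected.
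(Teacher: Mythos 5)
Your proof is correct and follows essentially the same route as the paper: the key inequality $(t-\alpha)\wt(\ell_2) \leq 1 + (k-1)\wt(\ell_2)$ is identical, and your contradiction step (assuming $k \leq t-\alpha-1$ forces $\wt(\ell_2) \leq 1/2$) is just an algebraic reorganization of the paper's direct rearrangement to $|S| \geq t-\alpha+1-\wt(\ell_2)^{-1}$ followed by $\wt(\ell_2)^{-1} < 2$. Both rely on Corollary~\ref{cor:ell1_ell2_adj}, Lemma~\ref{lem:bnd_second_largest}, and the hypothesis $t > s\alpha + 3\alpha - 4 > 2\alpha - 1$ in the same way.
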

\begin{proof}
  Let $S$ be the set of nonnegative lines adjacent to $\ell_2$.  By
  Corollary~\ref{cor:ell1_ell2_adj}, $\ell_1$ is a
  neighbour of $\ell_2$. Then
   \begin{align*}
     (t-\alpha) \wt(\ell_2) \leq \sum_{s \in S} \wt(s)  \leq  1 + (|S|-1) \wt(\ell_2).
   \end{align*}
Rearranging, this becomes $(t-\alpha) - \wt(\ell_2)^{-1} + 1 \leq |S|$.
The bound from Lemma~\ref{lem:bnd_second_largest}, with the fact that
$t >  s\alpha + 3\alpha - 4 > 2\alpha-1$, implies that $\wt(\ell_2)^{-1} < 2$, which proves
the assertion.
\qed \end{proof}

\begin{lemma}\label{lem:unified_48_to_411}
  The point $p$ lies on at most $s\alpha + 2\alpha - s -2$ negative lines
  and there are at most $s\alpha - s + 2\alpha - 3$ nonnegative lines not on $p$.
\end{lemma}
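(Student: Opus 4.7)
The plan is to introduce $A$ for the number of nonnegative lines through $p$ and to derive the single lower bound
\[
A \;\geq\; t - s\alpha - 2\alpha + s + 3.
\]
Both conclusions follow at once. The $t+1$ lines through $p$ split into $A$ nonnegative and $(t+1)-A$ negative lines, so the number of negative lines on $p$ is at most $(t+1) - A \leq s\alpha + 2\alpha - s - 2$. Because the standing hypothesis yields at most $t$ nonnegative lines in total and $A$ of them pass through $p$, the number of nonnegative lines not on $p$ is at most $t - A \leq s\alpha + 2\alpha - s - 3$, which matches $s\alpha - s + 2\alpha - 3$.

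To establish the lower bound on $A$, I would combine Lemma~\ref{lem:nhood_largest} applied to $\ell_1$ with Lemma~\ref{lem:bnd_nhood_second_largest} applied to $\ell_2$, via inclusion-exclusion on their nonnegative neighbourhoods off $p$. Since $\wt(\ell_1) = 1$ is maximal, Lemma~\ref{lem:nhood_largest} provides at least $t - \alpha + 1$ nonnegative neighbours of $\ell_1$; at most $A-1$ of these can pass through $p$ (namely the nonnegative lines through $p$ other than $\ell_1$ itself), so at least $t - \alpha - A + 2$ nonnegative neighbours of $\ell_1$ avoid $p$. Analogously, $\ell_2$ has at least $t - \alpha - A + 1$ nonnegative neighbours off $p$.

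The overlap then must be bounded. A line adjacent to both $\ell_1$ and $\ell_2$ but avoiding $p$ meets $\ell_1$ at some $p_1 \neq p$ and $\ell_2$ at some $p_2 \neq p$. Fixing $p_2 \in \ell_2 \setminus \{p\}$ (one of $s$ choices), the axiom $(\text{iii})$ gives exactly $\alpha$ lines through $p_2$ meeting $\ell_1$; one of them is $\ell_2$ itself, leaving $\alpha - 1$ lines through $p_2$ that meet $\ell_1$ at a point different from $p$ and therefore avoid $p$. Each qualifying line determines $p_2$ uniquely as its intersection with $\ell_2$, so the total number of lines adjacent to both $\ell_1$ and $\ell_2$ but not through $p$ is at most $s(\alpha - 1)$.

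Inclusion-exclusion then shows that the number of nonnegative lines off $p$ that are adjacent to $\ell_1$ or $\ell_2$ is at least
\[
(t - \alpha - A + 2) + (t - \alpha - A + 1) - s(\alpha - 1) \;=\; 2t - 2\alpha - s\alpha + s + 3 - 2A.
\]
This quantity cannot exceed $t - A$, the maximum possible number of nonnegative lines off $p$, and rearranging gives the required bound on $A$. The main point requiring care is the count $s(\alpha - 1)$ for the common neighbourhood off $p$; this is a routine partial-geometry calculation but the off-by-one coming from $\ell_2$ itself being one of the $\alpha$ lines through $p_2$ meeting $\ell_1$ is easy to mishandle. The rest is bookkeeping.
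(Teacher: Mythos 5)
Your proposal is correct and follows essentially the same route as the paper: both arguments combine Lemma~\ref{lem:nhood_largest} for $\ell_1$, Lemma~\ref{lem:bnd_nhood_second_largest} for $\ell_2$, the count $s(\alpha-1)$ of common neighbours of $\ell_1$ and $\ell_2$ avoiding $p$, and inclusion--exclusion against the total of at most $t$ nonnegative lines; you merely organize the bookkeeping around the number $A$ of nonnegative lines through $p$ instead of the number of nonnegative common neighbours of $\ell_1$ and $\ell_2$. The resulting bounds agree exactly with the paper's, so there is nothing to fix.
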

\begin{proof}
  We will count the number $x$ of nonnegative lines
  adjacent to both $\ell_1$ and $\ell_2$.
  By Lemma~\ref{lem:nhood_largest}, $\ell_1$ has at least $t-\alpha+1$
  nonnegative neighbours.  Similarly, by
  Lemma~\ref{lem:bnd_nhood_second_largest}, $\ell_2$ has at least $t-
  \alpha$ nonnegative neighbours. In total, there
  are only $t$ nonnegative lines, so there cannot be more than $t$
  nonnegative lines that are adjacent to either $\ell_1$ or
  $\ell_2$. Using the principle of inclusion and exclusion, this
  implies that
\begin{align*}
(t-\alpha+1) + (t-\alpha) - x \leq t
\end{align*}
Rearranging, this becomes
\[
x  \geq t- 2\alpha+1.
\]
  
  There are exactly $s(\alpha-1) + (t-1)$ lines adjacent to both
  $\ell_1$ and $\ell_2$. Of these lines, exactly $t-1$ include
  $p$. So there are exactly $s(\alpha - 1)$
lines adjacent to both $\ell_1$ and $\ell_2$ that do not contain $p$.
There are at least $x$
nonnegative lines that are adjacent to both $\ell_1$ and $\ell_2$. Of these, at least
\[
x  - s(\alpha - 1) \geq 
 t + s - (s+2)\alpha + 1
\]
contain $p$. As there are $t+1$ lines on $p$, this implies the first part of the assertion.
At most $t$ are nonnegative, so at most
\begin{align*}
  t - (t+s-(s+2)\alpha+3)
\end{align*}
lines not on $p$ are nonnegative. This implies the second part of the assertion. \qed 
\end{proof}

In the following we denote the number of nonnegative lines not on $p$ by $m$.

\begin{lemma}\label{lem:BoundOnG_1}
  Any nonnegative line $g$, not on $p$, satisfies
\[
\wt(g) \leq \frac{\alpha}{t-\alpha-m+1}.
\]
\end{lemma}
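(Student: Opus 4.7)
The plan is to reduce to the case where $g$ is the nonnegative line not on $p$ of maximum weight, then apply Lemma~\ref{lem:evaluebound} and split the neighbourhood of $g$ according to whether the neighbour passes through $p$ or not.

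More precisely, let $g^\ast$ be a nonnegative line not on $p$ of maximum weight among all such lines. Since $\ell_1 \ni p$ and $\wt(\ell_1) = 1$, every line (in particular every one through $p$) has weight at most $1$. By Lemma~\ref{lem:evaluebound} applied to $g^\ast$,
\[
(t-\alpha)\wt(g^\ast) \;=\; \sum_{s \sim g^\ast} \wt(s) \;\leq\; \sum_{\substack{s \sim g^\ast \\ \wt(s) \geq 0}} \wt(s),
\]
since the discarded terms are negative. Now split the nonnegative neighbours of $g^\ast$ into those through $p$ and those not through $p$. Because $p \notin g^\ast$, axiom (iii) of a partial geometry guarantees that exactly $\alpha$ lines through $p$ are adjacent to $g^\ast$, so the first group contributes at most $\alpha \cdot 1 = \alpha$. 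The second group consists of nonnegative lines not on $p$, distinct from $g^\ast$; there are at most $m-1$ of them and each has weight at most $\wt(g^\ast)$ by the extremal choice of $g^\ast$. Hence
\[
(t-\alpha)\wt(g^\ast) \;\leq\; \alpha + (m-1)\wt(g^\ast),
\]
which rearranges to $(t-\alpha-m+1)\wt(g^\ast) \leq \alpha$.

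To conclude, I need $t-\alpha-m+1 > 0$. By Lemma~\ref{lem:unified_48_to_411} we have $m \leq s\alpha - s + 2\alpha - 3$, so
\[
t-\alpha-m+1 \;\geq\; t - s\alpha - 3\alpha + s + 4,
\]
and Assumption~\ref{assumptions}(ii), $t > s\alpha + 3\alpha - 4$, makes this strictly positive. Dividing yields $\wt(g^\ast) \leq \alpha/(t-\alpha-m+1)$, and for any other nonnegative line $g$ not on $p$ the inequality $\wt(g) \leq \wt(g^\ast)$ completes the proof.

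The only delicate step is justifying the split of neighbours: it rests on the precise count $\alpha$ from axiom (iii), together with the fact that $p = \ell_1 \cap \ell_2$ lies on $\ell_1$, so the crude bound $\wt(\cdot) \leq 1$ suffices for the $p$-part, while the extremality of $g^\ast$ supplies the sharper bound $\wt(\cdot) \leq \wt(g^\ast)$ for the part off $p$. No further obstacle arises once the positivity of $t-\alpha-m+1$ is established via the previous lemma.
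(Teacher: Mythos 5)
Your proof is correct and follows essentially the same route as the paper's: reduce to the maximum-weight nonnegative line off $p$, apply Lemma~\ref{lem:evaluebound}, and split its nonnegative neighbours into the $\alpha$ lines through $p$ (weight at most $1$) and the at most $m-1$ lines off $p$ (weight at most $\wt(g^\ast)$). The only difference is that you explicitly verify $t-\alpha-m+1>0$ before dividing, a detail the paper leaves implicit; this is a welcome addition but not a different argument.
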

\begin{proof}
We can assume that $g$ is the highest weight nonnegative line not on $p$.
Denote the set of nonnegative lines adjacent to $g$ by $S$,
then
\[
 (t-\alpha) \wt(g) = \sum_{s \sim g} \wt(s) 
 \leq  \sum_{s \in S} \wt(s) \
  =   \sum_{\stackrel{s \in S}{p \in s }} \wt(s) + \sum_{\stackrel{s \in S}{p \not \in s }} \wt(s). 
\]
In the partial geometry, there are exactly $\alpha$ lines that include
$p$ and meet the line $g$; each of these lines has maximum
weight $1$. Further, there are $m-1$ nonnegative lines that
meet $g$ and do not contain $p$; each of these has maximum
weight $\wt(g)$.  Thus, we have that
\[
 (t-\alpha) \wt(g)
\leq 
\alpha \max_{\stackrel{s \in S}{p \in s}} \wt(s) + (m-1)  \max_{\stackrel{s \in S}{p \not \in s}} \wt(s)
 \leq   
\alpha  + (m-1) \wt(g). 
\]
Rearranging shows the assertion.
\qed \end{proof}

\begin{lemma}\label{lem:BoundOnh_1}
  Any negative line $h$ on $p$ satisfies
\[
(t-\alpha + 1) \wt(h) \geq \frac{-m\alpha}{t-\alpha-m+1}.
\]
\end{lemma}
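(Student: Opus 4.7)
The plan is to start from Lemma~\ref{lem:bnd_neg_weight_01}(a) applied to the negative line $\ell=h$, which gives
\[
(t-\alpha+1)\wt(h) \geq -P + \sum_{s \sim h,\ \wt(s)\geq 0} \wt(s).
\]
Since $h$ is itself negative, every nonnegative line is either adjacent to $h$ or not, so $P$ splits as $P = \sum_{s \sim h,\ \wt(s)\geq 0} \wt(s) + \sum_{s \not\sim h,\ \wt(s)\geq 0} \wt(s)$. Substituting this expression for $P$ into the inequality above collapses the adjacency sum against $-P$ and leaves
\[
(t-\alpha+1)\wt(h) \geq -\sum_{s \not\sim h,\ \wt(s)\geq 0} \wt(s).
\]

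The next step is to bound this remaining sum from above by $m\alpha/(t-\alpha-m+1)$. The key observation is that because $p \in h$, any line through $p$ meets $h$ at $p$ and is therefore adjacent to $h$; hence a nonnegative line $s$ with $s \not\sim h$ must avoid $p$ entirely. By the definition of $m$, there are exactly $m$ nonnegative lines off $p$, so the sum in question has at most $m$ terms. Lemma~\ref{lem:BoundOnG_1} bounds the weight of every nonnegative line off $p$ by $\alpha/(t-\alpha-m+1)$, and combining these two facts yields the stated inequality.

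I do not expect any real obstacle. The proof is essentially a short bookkeeping step: once the nonnegative lines through $p$ cancel out between the $-P$ term and the adjacency sum, what remains is controlled directly by the uniform weight bound of Lemma~\ref{lem:BoundOnG_1} together with the definition of $m$. The only point worth flagging is that Lemma~\ref{lem:BoundOnG_1} applies to \emph{every} nonnegative line off $p$, not only to the heaviest one, so it may legitimately be invoked term-by-term inside the sum.
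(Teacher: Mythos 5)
Your proposal is correct and is essentially the paper's own argument: both start from Lemma~\ref{lem:bnd_neg_weight_01}(a), observe that every nonnegative line not adjacent to $h$ must miss $p$ (so there are at most $m$ of them), and bound each such line's weight by Lemma~\ref{lem:BoundOnG_1}. The only difference is cosmetic --- you cancel the adjacency sum against $P$ directly, while the paper lower-bounds the sum of nonnegative lines through $p$ by $P - m\alpha/(t-\alpha-m+1)$ first --- and these are the same rearrangement.
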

\begin{proof}
By Lemma~\ref{lem:BoundOnG_1},
\begin{align*}
\sum_{\stackrel{p \in \ell}{\wt(\ell)\geq 0}} \wt(\ell) 
            = P - \sum_{\stackrel{p \not \in \ell}{\wt(\ell)\geq 0}} \wt(\ell) 
           \geq P -  m \frac{\alpha}{t-\alpha-m+1}. 
\end{align*}
Since $\wt(h)$ is negative, Lemma~\ref{lem:bnd_neg_weight_01} implies
\begin{align*}
 (t-\alpha +1) \wt(h) \geq \sum_{\stackrel{s \sim h}{\wt(s) \geq 0}} \wt(s) -P 
                   \geq \left( P -  m \frac{\alpha}{t-\alpha-m+1} \right) - P. 
\end{align*}
Rearranging shows the assertion. \qed \end{proof}

\begin{lemma}\label{lem:too_negative_line}
  Let $g$ be the highest weight nonnegative line not on $p$.
  There exists a line with weight at most
  \begin{align*}
    -\frac{P-2m\,\wt(g)}{s(\alpha - 1)}.
  \end{align*}
\end{lemma}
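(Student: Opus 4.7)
The plan is to exhibit the required line by averaging over the set
$T := \{\ell \in \cL : \ell \sim \ell_1,\ \ell \sim \ell_2,\ p \notin \ell\}$
of $s(\alpha - 1)$ common neighbors of $\ell_1$ and $\ell_2$ not containing $p$. It suffices to establish that $\sum_{\ell \in T} \wt(\ell) \le -(P - 2m\,\wt(g))$; then the averaging principle produces some $\ell \in T$ with $\wt(\ell) \le -(P - 2m\wt(g))/(s(\alpha-1))$, which is the claimed bound.

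The key identity comes from applying Lemma~\ref{lem:evaluebound} to $\ell_1$ and to $\ell_2$ and summing the results. In the sum the common neighbors are double-counted: these comprise the $t-1$ lines of $N_p \setminus \{\ell_1, \ell_2\}$ through $p$ together with the $s(\alpha-1)$ lines of $T$. Combining this with the global zero-sum identity $\sum_{\ell \in \cL} \wt(\ell) = (t+1)\sum_{r \in \cP} \wt(r) = 0$ (each point lies on exactly $t+1$ lines) isolates $\sum_{\ell \in T} \wt(\ell)$ as an explicit linear combination of $\wt(\ell_1) + \wt(\ell_2)$, the total weight of lines through $p$, and the total weight of lines off $p$ adjacent to neither $\ell_1$ nor $\ell_2$.

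To bound this expression from above I would use the following ingredients: the total positive contribution of lines off $p$ is at most $m\,\wt(g)$, since there are at most $m$ nonnegative lines off $p$ and each has weight at most $\wt(g)$ (Lemma~\ref{lem:BoundOnG_1}). Because this estimate enters twice---once in each application of Lemma~\ref{lem:evaluebound}---it accounts precisely for the factor $2m\,\wt(g)$ in the stated bound. Moreover, positive lines through $p$ contribute at least $P - m\,\wt(g)$ to the sum through $p$, and negative lines through $p$ are bounded below by Lemma~\ref{lem:BoundOnh_1}.

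The main obstacle is the contribution of \emph{negative} lines off $p$ that are adjacent to neither $\ell_1$ nor $\ell_2$, since a priori these can be arbitrarily negative. I would handle this by a dichotomy: if any such line already satisfies $\wt(\ell) \le -(P - 2m\wt(g))/(s(\alpha-1))$, then the conclusion holds immediately with that line; otherwise all such lines have weight strictly above this threshold, which yields a uniform lower bound on their total contribution, and the identity together with the bounds above forces $\sum_{\ell \in T} \wt(\ell) \le -(P - 2m\,\wt(g))$, as desired.
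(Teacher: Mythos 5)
Your overall architecture (apply Lemma~\ref{lem:evaluebound} to two intersecting lines, isolate the sum over their $s(\alpha-1)$ common neighbours off $p$ by inclusion--exclusion, then average) has the right shape, but you apply it to the wrong pair of lines, and this is fatal. With $\ell_1,\ell_2$ the identity contributes the term $(t-\alpha)(\wt(\ell_1)+\wt(\ell_2))\approx 2(t-\alpha)$, a large \emph{positive} quantity of order $t$ that none of your estimates can cancel, so you cannot reach the target $\sum_{\ell\in T}\wt(\ell)\le -(P-2m\,\wt(g))$. In fact that intermediate inequality is false in the regime of interest: from $(t-\alpha)\wt(\ell_1)=\sum_{s\sim\ell_1}\wt(s)$ and $\sum_{s\sim\ell_1,\,\wt(s)\ge 0}\wt(s)\le P-1\le t-1$ one gets $\sum_{s\sim\ell_1,\,\wt(s)<0}\wt(s)\ge 1-\alpha$, hence $\sum_{\ell\in T}\wt(\ell)\ge 1-\alpha$, whereas $-(P-2m\,\wt(g))$ is roughly $-(t-\alpha)$. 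The neighbourhood of a maximal-weight line is forced to be positive on average, so no very negative line can be extracted from it by averaging. Two further slips: in your identity the lines adjacent to neither chosen line enter with a $+$ sign, so very negative such lines \emph{help} the desired upper bound rather than obstruct it (your ``main obstacle'' is misidentified), and your dichotomy's second branch produces only a lower bound on their total, which is the wrong direction and in any case useless since there are $\Theta(st^2/\alpha)$ such lines.

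The paper's proof instead applies the identity to $h_1,h_2$, the two \emph{lowest}-weight lines through $p$ (both negative by Lemma~\ref{lem:bnd_neg_weight_01}(b)). Each $h_i$ is adjacent to every nonnegative line through $p$, so its nonnegative neighbours contribute at least $P-m\,\wt(g)$ (using Lemmas~\ref{lem:unified_48_to_411} and~\ref{lem:BoundOnG_1}), while its whole neighbourhood sums to $(t-\alpha)\wt(h_i)<0$; hence the negative neighbours $S_i$ satisfy $\sum_{s\in S_i}\wt(s)\le(t-\alpha)\wt(h_i)-P+m\,\wt(g)$, i.e.\ each is forced to be very negative. Since $\sum_{s\in S_1\cup S_2}\wt(s)\ge -P$, inclusion--exclusion pushes the deficit onto $S_1\cap S_2$: $\sum_{s\in S_1\cap S_2}\wt(s)\le(t-\alpha)(\wt(h_1)+\wt(h_2))-P+2m\,\wt(g)$. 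The through-$p$ part of $S_1\cap S_2$ is controlled by the count in Lemma~\ref{lem:unified_48_to_411} and absorbed into the $(t-\alpha)\wt(h_1)$ term using $t>s\alpha+3\alpha-4$, leaving the at most $s(\alpha-1)$ common neighbours off $p$ to carry total weight at most $-(P-2m\,\wt(g))$; the minimal one among them is the required line $b$. The essential idea you are missing is that the eigenvalue identity concentrates negative mass only around lines that are themselves negative yet adjacent to almost all of the positive mass.
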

\begin{proof}
By Lemma \ref{lem:bnd_neg_weight_01} (b), $g$ exists.
  In the notation of Lemma~\ref{lem:BoundOnh_1} let $h_1$ and $h_2$ be the two smallest weight
  lines on $p$, with $\wt(h_1) \leq \wt(h_2)$. By Lemma \ref{lem:bnd_neg_weight_01} (b), 
  this implies that both $h_1$ and $h_2$ are negative lines.
By Lemma~\ref{lem:unified_48_to_411}, the sum of the weights 
  of the nonnegative lines through $p$ is at least $P - m\wt(g)$.
  Let $S_i$ denote the set of negative lines adjacent to $h_i$.
  By Lemma~\ref{lem:evaluebound},
  \begin{align*}
(t-\alpha) \wt(h_i)  = \sum_{\stackrel{s \sim h_i}{\wt(s) \geq 0}} \wt(s)   +  \sum_{s \in S_i } \wt(s) 
                  \geq \left( P - m\wt(g) \right)   +  \sum_{s \in S_i} \wt(s).
  \end{align*}
  Rearranging, we have that 
\[
 \sum_{s \in S_i} \wt(s)  \leq  (t-\alpha) \wt(h_i)  - P + m\wt(g). 
\]
The sum of all negative lines is $-P$, so
\begin{align*}
  -P \leq (t-\alpha)(\wt(h_1)+\wt(h_2)) - 2P + 2m\wt(g) - \sum_{s \in S_1 \cap S_2} \wt(s).
\end{align*}

A line $s \in S_1 \cap S_2$ either contains $p$ or $s$ is one of the $s(\alpha-1)$ lines 
which meet $h_1$ and $h_2$, but does not contain $p$.
By Lemma~\ref{lem:unified_48_to_411}, the number of negative lines that
contain $p$ is at most $s\alpha + 2 \alpha - s - 2$ and these have at most weight $\wt(h_1)$.
Let $b$ denote the smallest weight line among the lines $S_1 \cap S_2$ which do not contain $p$.
As $t > s\alpha + 3\alpha - 4$ and $\wt(h_1), \wt(h_2) < 0$, it follows that
\begin{align*}
  -P \leq & (t-\alpha)(\wt(h_1)+\wt(h_2)) - 2P + 2m\wt(g) \\
        &-(s\alpha + 2 \alpha - s - 2)\wt(h_1) - s(\alpha-1)\wt(b)\\
  \leq &  - 2P + 2m\wt(g) - s(\alpha-1)\wt(b).
\end{align*}
Rearranging shows that the existence of $b$ implies the assertion.
\qed \end{proof}

The following result implies Theorem~\ref{thm:mainbound}.

\begin{proposition}\label{prop:nonmms_implies_small_t}
  If the partial geometry does not have the MMS star property, then
  \begin{align*}
t < \frac{1}{2} (\sqrt{(8\alpha^2-8\alpha)s+20\alpha^2-36\alpha+9}+(2\alpha-2)s+4\alpha-5).
  \end{align*}
\end{proposition}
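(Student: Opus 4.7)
The plan is to chain the bounds established so far into a single quadratic inequality in $t$ and then solve it. The decisive observation is that the line $b$ produced in Lemma~\ref{lem:too_negative_line} is itself a negative line, so Lemma~\ref{lem:bnd_neg_weight_01}(a) applied to $b$ forces
\[
\wt(b) \geq -\frac{P}{t - \alpha + 1}.
\]
Pairing this lower bound with the upper bound $\wt(b) \leq -(P - 2m\,\wt(g))/(s(\alpha - 1))$ from Lemma~\ref{lem:too_negative_line}, substituting $\wt(g) \leq \alpha/(t - \alpha - m + 1)$ from Lemma~\ref{lem:BoundOnG_1}, and using $P \geq t - \alpha + 1$ from Lemma~\ref{lem:bnd_pos_weight_01} collapses everything, after clearing the (positive) denominators, to
\[
u\,(u + s(\alpha - 1) - m) \leq 2 m \alpha, \qquad \text{where } u := t - \alpha + 1 - s(\alpha - 1).
\]
If $u \leq 0$, the desired conclusion already follows, so I may assume $u > 0$.

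Viewing this relation as a quadratic in $u$, its positive root
\[
u_+(m) = \tfrac{1}{2}\bigl(m - s(\alpha-1) + \sqrt{(s(\alpha - 1) - m)^2 + 8 m \alpha}\bigr)
\]
is monotonically increasing in $m$ on $[0,\,s(\alpha - 1) + 2\alpha - 3]$; this reduces, via a short squaring computation, to the harmless inequality $s(\alpha - 1) > 2\alpha$, which is automatic in the regime of Assumptions~\ref{assumptions}. Thus I may substitute the maximum value $m = s(\alpha - 1) + 2\alpha - 3$ supplied by Lemma~\ref{lem:unified_48_to_411}. The cross term then simplifies to $(3 - 2\alpha)u$ and the constant to $2\alpha s(\alpha - 1) + 4\alpha^2 - 6\alpha$; the quadratic formula delivers
\[
u \leq \tfrac{1}{2}\!\left((2\alpha - 3) + \sqrt{(8\alpha^2 - 8\alpha)s + 20\alpha^2 - 36\alpha + 9}\right),
\]
and translating back via $t = u + s(\alpha - 1) + \alpha - 1$ yields exactly the claimed bound.

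The main obstacle is sign and positivity bookkeeping: since $\wt(b)$ is negative, the two-sided estimate on it must be negated correctly so that the correct direction survives each step; the positivity of the denominators $s(\alpha - 1)$, $t - \alpha + 1$, and $t - \alpha - m + 1$ has to be verified (each follows from Assumptions~\ref{assumptions} combined with the upper bound $m \leq s(\alpha-1) + 2\alpha - 3$ from Lemma~\ref{lem:unified_48_to_411}); and the monotonicity of $u_+(m)$ must be checked before maximizing over $m$. Once these are in place, the computation is purely algebraic.
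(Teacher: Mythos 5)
Your argument is in substance the paper's own proof: it chains Lemmas~\ref{lem:too_negative_line}, \ref{lem:bnd_neg_weight_01}, \ref{lem:bnd_pos_weight_01}, \ref{lem:BoundOnG_1} and \ref{lem:unified_48_to_411} into the single quadratic inequality $(t-\alpha-s\alpha+s+1)(t-s\alpha+s-3\alpha+4)<2\alpha(s\alpha-s+2\alpha-3)$ --- your substitution $u=t-\alpha+1-s(\alpha-1)$ is purely cosmetic --- and your final arithmetic reproduces the stated bound exactly. The one step whose justification needs repair is the monotonicity of $u_+(m)$: the inequality $s(\alpha-1)>2\alpha$ is \emph{not} automatic under Assumptions~\ref{assumptions}; it fails, for instance, for $s=2$, $\alpha=3$ (Steiner triple systems) and for $s=2$, $\alpha=2$ (Latin squares), both central examples in this paper. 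The monotonicity conclusion nevertheless survives, because your squaring step is only required when $s(\alpha-1)-4\alpha-m\geq 0$, which already forces $s(\alpha-1)\geq 4\alpha>2\alpha$; in the remaining case the quantity being compared to the square root is negative and there is nothing to prove. In fact the detour through the root function can be avoided entirely: in $u\,(u+s(\alpha-1)-m)\leq 2\alpha m$ with $u>0$ one may simply replace $m$ by its maximum $s(\alpha-1)+2\alpha-3$ from Lemma~\ref{lem:unified_48_to_411} on both sides, since this decreases the (still positive, by your denominator check) left-hand factor $u+s(\alpha-1)-m$ and increases the right-hand side; this is effectively what the paper does when it observes that $\alpha m/(t-\alpha-m+1)$ is increasing in $m$.
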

\begin{proof}
  By Lemma \ref{lem:too_negative_line}, there exists a negative  line $b$ with 
  $\wt(b) s(\alpha-1) \leq 2m\,\wt(g) - P$. 
  By Lemma~\ref{lem:bnd_neg_weight_01}, we obtain.
  \begin{align*}
    (t-\alpha+1) \frac{2m\,\wt(g) - P}{s(\alpha - 1)} > -P.
  \end{align*}
  Solving for $P$, applying the lower bound in Lemma~\ref{lem:bnd_pos_weight_01} 
  and simplifying yields
  \begin{align}\label{eq:lowermwtg}
    t-\alpha - s\alpha + s + 1 < 2 m\,\wt(g).
  \end{align}
  Applying Lemma~\ref{lem:BoundOnG_1} gives an upper bound on
  $m\wt(g)$; since this upper bound is increasing in $m$,
  Lemma~\ref{lem:unified_48_to_411} can be applied to get the bound
  \begin{align}\label{eq:uppermwtg}
    m\wt(g) < \frac{\alpha m}{t - \alpha - m + 1} \leq \frac{\alpha (s\alpha - s + 2\alpha -3)}{t -s\alpha + s - 3\alpha + 4}.
  \end{align}
Putting equations~\eqref{eq:lowermwtg} and \eqref{eq:uppermwtg} together yields
\begin{align*}
(t-\alpha - s\alpha + s + 1)(t -s\alpha + s - 3\alpha + 4)  < 2 \alpha (s\alpha - s + 2\alpha -3).
\end{align*}
Solving for $t$ shows the assertion.
\qed \end{proof}

The same arguments show a similar result for the strict MMS star
property.

\begin{proposition}\label{prop:nonmms_implies_small_t_strict_version}
  Suppose that $t > s\alpha +3\alpha - 3$.
  If the partial geometry does not have the MMS strict star property, then
  \begin{align*}
    t < \frac{1}{2} (\sqrt{(8\alpha^2-8\alpha)s+20\alpha^2-28\alpha+9}+(2\alpha-2)s+4\alpha-5).
  \end{align*}
\end{proposition}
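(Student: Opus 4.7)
The plan is to rerun the proof of Proposition~\ref{prop:nonmms_implies_small_t} with the hypothesis weakened from ``the MMS star property fails'' to ``the strict MMS star property fails,'' tracking how the constants change. Failure of the strict MMS star property means that for every point at least one line through it is negative, equivalently that no star is entirely nonnegative.

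First I would split on the total number $N$ of nonnegative lines. If $N \leq t$ then the ordinary MMS star property also fails, and Proposition~\ref{prop:nonmms_implies_small_t} itself gives a bound on $t$ strictly stronger than the one claimed here. So I may assume $N \geq t+1$. For the tight analysis I would focus on the extremal case $N = t+1$, dealing with $N > t+1$ either by a perturbation argument that drives a boundary nonnegative line down to weight $0^-$ (preserving zero-sum and the failure of strict MMS) and iterating, or by showing directly that additional nonnegative lines cannot tighten the structural estimates used downstream.

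Next I would re-derive the chain of Lemmas~\ref{lem:bnd_pos_weight_01}--\ref{lem:too_negative_line} under $N = t+1$. Lemmas~\ref{lem:evaluebound}, \ref{lem:nhood_largest}, \ref{lem:bnd_neg_weight_01}, and \ref{lem:BoundOnG_1} are unchanged, and the lower bound $P > t - \alpha + 1$ also survives. Lemma~\ref{lem:bnd_second_largest} loses one in its denominator, giving $\wt(\ell_2) \geq (t-\alpha)/t$, but the strengthened hypothesis $t > s\alpha + 3\alpha - 3$ still forces $\wt(\ell_2) > 1/2$, so Lemma~\ref{lem:bnd_nhood_second_largest} carries over verbatim. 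The inclusion--exclusion bound in Lemma~\ref{lem:unified_48_to_411} loses one on the right-hand side, yielding $x \geq t - 2\alpha$, and this nudges the bounds on ``negative lines through $p$'' and on the number $m$ of nonnegative lines off $p$ upward by one. In Lemma~\ref{lem:too_negative_line} the step of dropping $(t-\alpha)(\wt(h_1)+\wt(h_2))$ and the term proportional to $\wt(h_1)$ is still valid because the shifted coefficient of $\wt(h_1)$ remains nonnegative under $t > s\alpha + 3\alpha - 3$, which is precisely why the strict version requires this slightly stronger assumption; the conclusion of the lemma is otherwise identical.

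Finally I would feed the updated $m$-bound into the quadratic chain at the end of the proof of Proposition~\ref{prop:nonmms_implies_small_t}. The lower bound $t - \alpha - s\alpha + s + 1 < 2m\,\wt(g)$ is unchanged, while the upper bound $m\,\wt(g) \leq \alpha m/(t-\alpha-m+1)$ evaluated at the new $m$-bound modifies the final inequality $(t-A)(t-B) < C$ by replacing $C$ with $C + 2\alpha$ while leaving $A$ and $B$ intact. Solving this quadratic in $t$ produces the stated bound, with the constant $-36\alpha+9$ under the square root replaced by $-28\alpha+9$. The principal obstacle is the careful bookkeeping of how the single extra nonnegative line (from $N = t$ up to $N = t+1$) propagates through each of the lemmas and the final quadratic so as to produce exactly the stated expression; the most delicate sub-step is the reduction from $N \geq t+1$ to $N = t+1$, which is where I would expect the argument to cost the most thought.
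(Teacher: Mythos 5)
The paper itself gives no written proof of this proposition --- it only asserts that ``the same arguments'' as in Proposition~\ref{prop:nonmms_implies_small_t} apply --- so your plan of re-running the lemma chain with relaxed constants is certainly in the intended spirit. However, as written the proposal has a genuine gap at the step you yourself flag as delicate: the reduction from ``the strict MMS star property fails'' to ``there are exactly $t+1$ nonnegative lines.'' Failure of the strict property only says that the negative lines cover every point; it is entirely compatible with the number $N$ of nonnegative lines being far larger than $t+1$ (one needs at least roughly $(st+\alpha)/\alpha$ negative lines to cover $\cP$, which still leaves almost all of $\cL$ nonnegative). Your two proposed fixes do not close this. The perturbation idea --- pushing a nonnegative line to weight $0^-$ --- changes point weights and hence the weights of all lines through those points, and there is no argument that this can be done while keeping the weighting zero-sum, keeping every star non-nonnegative, and actually decreasing $N$. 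The alternative claim that ``additional nonnegative lines cannot tighten the structural estimates'' is backwards: the estimates \emph{degrade} as $N$ grows. For example, the analogue of Lemma~\ref{lem:bnd_second_largest} gives $\wt(\ell_2)\geq (t-\alpha)/(N-1)$, so for large $N$ the inequality $\wt(\ell_2)^{-1}<2$ used in Lemma~\ref{lem:bnd_nhood_second_largest} fails and the whole downstream chain collapses. Some argument specific to the covering structure of the negative lines (or a different reduction) is needed here.

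There is also a bookkeeping error even in the case $N=t+1$. The inclusion--exclusion in Lemma~\ref{lem:unified_48_to_411} becomes $(t-\alpha+1)+(t-\alpha)-x\leq t+1$, so $x\geq t-2\alpha$ and the count of nonnegative lines on $p$ drops by one; but the total budget of nonnegative lines has simultaneously risen from $t$ to $t+1$, so the bound on $m$ (nonnegative lines off $p$) increases by \emph{two}, to $s\alpha-s+2\alpha-1$, not by one. Moreover, any increase in the bound on $m$ also changes $B$, since $B$ is produced by substituting the maximal $m$ into the denominator $t-\alpha-m+1$ of~\eqref{eq:uppermwtg}; so your assertion that the final quadratic changes only by $C\mapsto C+2\alpha$ ``while leaving $A$ and $B$ intact'' does not follow from your own accounting, and the constants you would actually obtain do not reproduce the stated $-28\alpha+9$. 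To make the proposal into a proof you would need both to justify the reduction on $N$ and to redo the constant-tracking carefully enough to recover (or correct) the displayed bound.
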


\section{Constructions of counterexamples for designs}

In this section we consider $2$-$(v,k,1)$ designs as a partial
geometries. The pair $(\cV, \cB)$ is a $2$-$(v,k,1)$ design if $\cV$ is
a base set of size $v$, usually assumed to be $\cV = \{1, \dots, v\}$,
and $\cB$ is a set of blocks. Each block is a subset of
$\cV$ with size $k$. The set of blocks has the property that every
pair of elements from $\cV$ occurs in exactly one block.

The number of blocks in a $2$-$(v,k,1)$ design is
\[
\frac{\binom{v}{2} }{\binom{k}{2}} = \frac{v(v-1)}{k(k-1)}
\]
and each element in $\cV$ belongs to exactly $\frac{v-1}{k-1}$ blocks.

Any $2$-$(v,k,1)$ design is a partial geometry; the points are the
elements of $\cV$ and the set of blocks $\cB$ are the lines. Each line
contains $k$ points, each point is in $\frac{v-1}{k-1}$ lines. If a
point $p$ is not on a block $B$, there are exactly $k$ blocks that
contain $p$ and intersect $B$. Thus a $2$-$(v,k,1)$ design is a
partial geometry with parameters $(k-1, \frac{v-1}{k-1}-1, k)$.

As we have $\alpha=k$, Proposition \ref{prop:nonmms_implies_small_t} implies that
all $2$-$(v, k, 1)$ designs with
\begin{align*}
  v > \frac{(k-1)\sqrt{8{k}^{3}+4{k}^{2}-28k+9}}{2} + (k-1)(k^2+9/2)
\end{align*}
have the MMS star property.

In this section, we give several examples of designs that do not have
the MMS star property. The first counterexample is a design that
contains a subdesign of the right size and a weighting with just two
weights. If $(\cV, \cB)$ is a $2$-$(v,k,1)$ design and $(\cV', \cB')$
is a $2$-$(w,k,1)$ design (where $w = |\cV'|$) with $\cV' \subset \cV$
and $\cB' \subset \cB$, then $(\cV', \cB')$ is a \textsl{subdesign} of
$(\cV, \cB)$.

\begin{theorem}\label{thm:designcounterwith2}
  Let $(\cV, \cB)$ be a $2$-$(v,k,1)$ design. If $(\cV, \cB)$ has a
  $2$-$(w,k,1)$ subdesign with $v < kw$ and $w^2-w < k(v-1)$,
then $(\cV, \cB)$ does not have the MMS star property.
\end{theorem}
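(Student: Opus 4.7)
The plan is to exhibit an explicit zero-sum two-valued weighting on $\cV$ for which the nonnegative blocks are exactly the blocks of the subdesign $(\cV', \cB')$, and then use the two numerical hypotheses to conclude that this count is strictly smaller than the size of a star.

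The key structural observation is that every block $B \in \cB$ satisfies either $B \in \cB'$ (so $B \subseteq \cV'$) or $|B \cap \cV'| \leq 1$. Indeed, if two distinct points $p_1, p_2 \in \cV'$ both lie in some $B \in \cB$, then the unique block of the subdesign $(\cV', \cB')$ through $p_1, p_2$ is also a block of $\cB$ through $p_1, p_2$; uniqueness of the pair in $(\cV, \cB)$ forces these two blocks to coincide, so $B \in \cB'$ and $B \subseteq \cV'$.

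With this in hand, I would define $f \colon \cV \to \setR$ by $f(p) = v - w$ for $p \in \cV'$ and $f(p) = -w$ for $p \in \cV \setminus \cV'$. A direct computation gives $\sum_{p \in \cV} f(p) = w(v-w) + (v-w)(-w) = 0$, so $f$ is a zero-sum weighting. The structural fact then yields three cases for the weight of a block $B \in \cB$: if $B \in \cB'$, then $\wt(B) = k(v - w) > 0$; if $|B \cap \cV'| = 1$, then $\wt(B) = (v - w) - (k - 1)w = v - kw$; and if $|B \cap \cV'| = 0$, then $\wt(B) = -kw$. The hypothesis $v < kw$ makes the last two quantities negative, so the set of nonnegative blocks is exactly $\cB'$.

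Finally, $|\cB'| = \frac{w(w-1)}{k(k-1)}$, while a star in $(\cV, \cB)$ has size equal to the replication number $\frac{v - 1}{k - 1}$. Dividing the hypothesis $w^2 - w < k(v - 1)$ by $k(k-1)$ gives exactly $|\cB'| < \frac{v-1}{k-1}$, so the MMS star property fails. The only non-routine ingredient is the structural observation about subdesigns in a $2$-$(v,k,1)$ design; once it is established, the rest is a direct verification, so I do not expect a genuine obstacle here.
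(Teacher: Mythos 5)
Your proof is correct and follows essentially the same route as the paper: the identical two-valued zero-sum weighting ($v-w$ on the subdesign's points, $-w$ elsewhere), the same case analysis showing only the subdesign's blocks are nonnegative under $v<kw$, and the same count comparison via $w^2-w<k(v-1)$. Your explicit justification of the fact that a block meeting $\cV'$ in two points must lie in $\cB'$ is a welcome detail the paper leaves implicit.
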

\begin{proof}
  To prove this we build a weighting on $\cV$ that does not have
  $\frac{v-1}{k-1}$ nonnegative sets.  Assume that $(\cV', \cB')$ is
  the $2$-$(w,k,1)$ subdesign. Assign a weight of $|\cV|-|\cV'|$ to
  each element in $\cV'$, and a weight of $-|\cV'|$ to the remaining
  elements. Clearly this is a zero-sum weighting.

  The blocks contained in the subdesign will have weight
  $k(|\cV|-|\cV'|)$, which is positive. Any other block in $(\cV,
  \cB)$ will contain at most one element from $\cV'$, so will have weight at most
\[
|\cV|-|\cV'|  - (k-1) |\cV'| = |\cV|- k |\cV'|.
\]
If $v < kw$, then this is negative, and the only nonnegative blocks
under this weighting are the blocks of $(\cV', \cB')$.

There are only $\frac{w(w-1)}{k(k-1)}$ blocks in $(\cV, \cB)$, so if
\[
\frac{w(w-1)}{k(k-1)} < \frac{v-1}{k-1},
\]
then $(\cV', \cB')$ does not have the MMS star property.
\qed \end{proof}

For example, any $2$-$(19,3,1)$
design with a $2$-$(7,3,1)$ subdesign satisfies the condition of Theorem~\ref{thm:designcounterwith2}.  A
complete classification of designs with these parameters is given in
\cite{Kaski2008}. For $k=3$, these designs are the only ones that meet
the conditions in Theorem~\ref{thm:designcounterwith2}.

\begin{corollary}\label{cor:designstrictcounterwith2}
Let $(\cV, \cB)$ be a $2$-$(v,k,1)$ design. If $(\cV, \cB)$ has a $2$-$(w,k,1)$ subdesign with
$v < kw$ and $w^2-w \leq k(v-1)$,
then $(\cV, \cB)$ does not have the strict MMS star property.
\end{corollary}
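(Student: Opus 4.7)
\proofoutline The plan is to reuse the weighting from the proof of Theorem~\ref{thm:designcounterwith2}: assign weight $v-w$ to each point of $\cV'$ and weight $-w$ to each point of $\cV\setminus\cV'$. As argued there, this weighting is zero-sum, the blocks of $\cB'$ all have positive weight $k(v-w)$, and every other block of $\cB$ contains at most one point of $\cV'$ (since any two points of $\cV'$ already share a unique block, which lies in $\cB'$) and thus has weight at most $v-kw<0$ under the hypothesis $v<kw$. In particular, the nonnegative blocks are exactly those of $\cB'$.

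To upgrade the conclusion from failure of the MMS star property to failure of the strict MMS star property, I would verify that for every point $p\in\cV$ at least one block of $\cB$ through $p$ is negative. For $p\in\cV\setminus\cV'$ this is immediate, as no block of $\cB'$ contains $p$. For $p\in\cV'$, the point $p$ lies on $(v-1)/(k-1)$ blocks of $\cB$ but on only $(w-1)/(k-1)$ blocks of $\cB'$, so as soon as $w<v$ at least one block through $p$ lies in $\cB\setminus\cB'$ and is negative.

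It then remains to extract $w<v$ from the hypothesis $w^2-w\leq k(v-1)$. Setting $w=v$ collapses this to $v(v-1)\leq k(v-1)$, which together with the standing requirement $v\geq k$ forces $v=k$ and thus reduces the design to a single block. Excluding that degenerate case, $w<v$ holds and the preceding paragraph applies uniformly to every $p\in\cV$, so no star is entirely nonnegative.

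The main point is conceptual rather than computational: the relaxation from the strict inequality $w^2-w<k(v-1)$ in Theorem~\ref{thm:designcounterwith2} to $w^2-w\leq k(v-1)$ is exactly what is needed when one weakens the conclusion from ``too few nonnegative blocks to form a star'' to ``no star entirely nonnegative''. At the boundary $w(w-1)=k(v-1)$ the weighting produces as many nonnegative blocks as a star, but they are split across the points of $\cV'$ with smaller replication $(w-1)/(k-1)<(v-1)/(k-1)$, so no individual point sees them all.
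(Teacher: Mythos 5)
Your proposal is correct and follows exactly the route the paper intends: the corollary is stated without proof precisely because it reuses the weighting from Theorem~\ref{thm:designcounterwith2}, with the non-strict inequality $w^2-w\leq k(v-1)$ sufficing once one observes that each point of $\cV'$ lies on only $(w-1)/(k-1)<(v-1)/(k-1)$ of the nonnegative blocks, so no star is entirely nonnegative. Your handling of the boundary and degenerate cases is a careful and welcome addition, but the argument is essentially the paper's own.
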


The following theorem by Doyen and Wilson~\cite{Doyen1973} shows that
such designs exist.

\begin{theorem}\label{thm:DoyenWilson}
  A $2$-$(w,3,1)$ design can be embedded in a $2$-$(v,3,1)$ design if and only
  if $v \geq 2w+1$ and $v \equiv 1$ or $3 \pmod{6}$.
\end{theorem}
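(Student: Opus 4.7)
The plan is to attack the two directions separately. For necessity, I would first show $v \equiv 1,3 \pmod 6$ is forced by the existence of the ambient $2$-$(v,3,1)$ design itself: each point lies in $(v-1)/2$ blocks (so $v$ must be odd), and the total number of blocks $v(v-1)/6$ must be an integer, which pins down $v$ modulo $6$. Next, for the bound $v \geq 2w+1$, I would pick any point $p \in \cV\setminus\cV'$ and look at the $(v-1)/2$ blocks through $p$. For each $q \in \cV'$, the unique block through $\{p,q\}$ must meet $\cV\setminus\cV'$ in its third point (else the whole block would lie in $\cV'$, contradicting $p\notin\cV'$), giving $w$ distinct blocks through $p$ that cross between $\cV'$ and $\cV\setminus\cV'$. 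Since these are among the $(v-1)/2$ blocks on $p$, we need $(v-1)/2 \geq w$, i.e.\ $v \geq 2w+1$.

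Sufficiency is the real work, and I would proceed constructively. The clean base case is $v = 2w+1$: here $|\cV\setminus\cV'| = w+1$ is even (since $w \equiv 1,3 \pmod 6$ is odd), so $K_{w+1}$ admits a $1$-factorization into $w$ perfect matchings $F_1,\dots,F_w$. Label the factors by the points of $\cV'$, and for each $p \in \cV'$ and each edge $\{x,y\} \in F_p$ introduce the new block $\{p,x,y\}$. A direct count shows that together with the blocks of the subdesign this covers every pair in $\cV$ exactly once, yielding the desired $2$-$(2w+1,3,1)$ design.

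For the general case $v > 2w+1$ with $v \equiv 1,3 \pmod 6$, I would aim at a recursive construction: reduce to embedding $(\cV',\cB')$ in a $2$-$(v',3,1)$ design for some smaller admissible $v'$ with $2w+1 \leq v' < v$, then extend that intermediate design to $v$ using a standard STS extension ("doubling," "$+2$," or Skolem/Bose-type difference constructions), taking care that the new points and blocks do not disturb the already-embedded $(\cV',\cB')$. Equivalently, one can formulate the missing blocks as a group divisible design on $\cV\setminus\cV'$ with a distinguished group of size $w$ filled by the subdesign, and construct it via resolvable designs or $1$-factorizations of complete multipartite graphs.

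The main obstacle is exactly this second construction: making the recursion go through for every residue class of $v-2w \pmod{6}$ and handling the few small values of $v-w$ where the generic construction breaks down. This is where Doyen and Wilson's original argument becomes a careful case analysis; I would expect to spend most of the effort there, cross-checking the existence of the required auxiliary $1$-factorizations and group divisible designs and verifying that the parameters produced by the recursion exhaust all admissible $v$.
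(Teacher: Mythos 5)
First, a remark on scope: the paper does not prove this statement at all — it is the Doyen--Wilson embedding theorem, quoted with a citation to \cite{Doyen1973} and used as a black box to produce examples for Corollary~\ref{cor:designstrictcounterwith2} and Theorem~\ref{thm:designcounterwith3}. So there is no internal proof to compare your attempt against, and your proposal has to stand on its own.

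Judged that way, the easy parts are fine but there is a genuine gap exactly where the theorem is hard. The divisibility conditions and the bound $v \geq 2w+1$ are essentially correct, although your justification that the $w$ blocks through a point $p \notin \cV'$ are distinct is slightly off: if the block through $\{p,q\}$ had a second point $q'$ of $\cV'$, that block would not ``lie in $\cV'$'' (it contains $p$); the correct contradiction is that the pair $\{q,q'\}$ is already covered by a block of $\cB'$, which is contained in $\cV'$, so uniqueness of the block on $\{q,q'\}$ is violated. The base case $v = 2w+1$ via a one-factorization of $K_{w+1}$ is correct and is indeed the standard opening move. But for $v > 2w+1$ you only gesture at ``a standard STS extension'' or ``a group divisible design on $\cV\setminus\cV'$ with a distinguished group of size $w$,'' and you yourself flag this as the main obstacle. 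That step is the substance of the theorem: one must decompose $K_v$ minus a clique on $w$ vertices into triangles for \emph{every} admissible pair $(w,v)$, and Doyen and Wilson's argument is a lengthy collection of explicit difference-family and one-factorization constructions split into cases by the residues of $v$ and $w$ modulo $6$, plus treatment of small exceptional parameters. Until those constructions are exhibited and verified, the sufficiency direction remains unproved, so the proposal as written does not establish the theorem.
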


This means that the unique $2$-$(9,3,1)$ design can be embedded in a
$2$-$(25,3,1)$ design. From Corollary~\ref{cor:designstrictcounterwith2}, this
$2$-$(25,3,1)$ design does not have the strict MMS star property.

We can also construct examples with larger block size.  For $q$ a
prime power, there exists a $2$-$(q^3+q^2+q+1, q+1,1)$ design. The
points of the design are the $1$-dimensional subspaces of the
$4$-dimensional vector space $\setF_q^4$. The blocks are the
$2$-dimensional subspaces of $\setF_q^4$; specifically, each block
consists of all the $1$-dimensional subspaces of a $2$-dimensional
subspaces.  A $3$-dimensional subspace of $\setF_q^4$ induces a
$2$-$(q^2+q+1, q+1, 1)$ design.  This larger design does not have the
strict MMS star property by
Corollary~\ref{cor:designstrictcounterwith2}.

\begin{theorem}\label{thm:designcounterwith3}
  Let $(\cV, \cB)$ be a $2$-$(v,k,1)$ design. If $(\cV, \cB)$ contains
  a $2$-$(w,k,1)$ subdesign with $v< (k-1)(w-k)+k$ and
$v<\frac{(w-k)(w-1)}{k}+1$,
then $(\cV, \cB)$ does not have the MMS star property.
\end{theorem}
\begin{proof}
  Assume the elements of the $2$-$(w,k,1)$ subdesign are $\cV' =\{1,
  \dots, w\}$.  Assign the element $1$ a weight of 
\[
(v-w) - (w-1)\left(\frac{v-k}{w-k}\right)
\]
(note that this is a negative weight).
Assign a weight of $\frac{v-k}{w-k}$ to the remaining $w-1$ points
in the subdesign. All of the remaining points in the $2$-$(v,k,1)$
design are given a weight of $-1$.

Any block in the $2$-$(w,k,1)$ subdesign, that does not contain $1$, has
weight $\frac{k(v-k)}{w-k}$.  We claim that these are the only
nonnegative blocks. If a block in the $2$-$(w,k,1)$ subdesign contains $1$, then its weight is
\begin{align*}
(v-w) - (w-1)\left(\frac{v-k}{w-k}\right) + \frac{(k-1)(v-k)}{w-k} = -(w-k).\\
\end{align*}
Since $k<w$, this is negative. Any block not in the subdesign,
contains no more than one element from $\cV'$. Thus the maximum weight
of a block not contained in the subdesign is the weight of a block
that contains one element from $\cV'\backslash \{1\}$. The weight of
such a block is
\[
(k-1)(-1) + \frac{v-k}{w-k}.
\]
Since $v < (k-1)(w-k)+k$, this is negative.

Thus this is a weighting in which only the blocks from the subdesign
that do not contain $1$ are nonnegative. The number of such blocks is
\[
\frac{w(w-1)}{k(k-1)} - \frac{w-1}{k-1} = \frac{(w-k)(w-1)}{k(k-1)}.
\]
The size of a star in the design is $\frac{v-1}{k-1}$. Since $v <
\frac{(w-k)(w-1)}{k}+1$, the size of the set of nonnegative blocks is
strictly smaller than the size of a star.
\qed \end{proof}

There are examples of designs that meet the conditions in
Theorem~\ref{thm:designcounterwith3}. For example, from
Theorem~\ref{thm:DoyenWilson}, a $2$-$(9,3,1)$ design can be embedded
in either a $2$-$(19,3,1)$ design or $2$-$(21,3,1)$ design;
Theorem~\ref{thm:designcounterwith3} implies that neither of these
larger designs has the MMS star property.

\begin{lemma}\label{lem:counterexample_jungnickel_tonchev}
  For $q$ a prime power, there exists a $2$-$(q^3+q^2+q+1, q+1, 1)$
  design which does not have the MMS star property.
\end{lemma}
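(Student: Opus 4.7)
\proofoutline The plan is to appeal to a non-classical construction, due to Jungnickel and Tonchev, of a $2$-$(q^3+q^2+q+1,q+1,1)$ design $D$ with richer substructure than $PG(3,q)$. The preceding text notes that $PG(3,q)$ admits $2$-$(q^2+q+1,q+1,1)$ subdesigns coming from $3$-dimensional subspaces, and Corollary~\ref{cor:designstrictcounterwith2} applied to such a subdesign yields exactly $q^2+q+1$ nonnegative blocks---tying, rather than strictly beating, the star bound. To drop strictly below the star bound one must use a design admitting a more flexible weighting.

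I would invoke the Jungnickel--Tonchev construction to exhibit $D$ together with a distinguished configuration: most naturally a $2$-$(q^2+q+1,q+1,1)$ subdesign $D_1$ alongside auxiliary substructure (for instance, a second $2$-$(q^2+q+1,q+1,1)$ subdesign $D_2$ meeting $D_1$ in a prescribed way, or a distinguished external point with anomalous incidence to $V(D_1)$). Starting from the characteristic-function weighting of Corollary~\ref{cor:designstrictcounterwith2} on $V(D_1)$, I would perturb it by small compensating weight shifts supported on the auxiliary structure, calibrated so that at least one block of $D_1$ loses enough weight to become negative while every block outside $D_1$ remains strictly negative. Counting then gives strictly fewer than $q^2+q+1$ nonnegative blocks, which is precisely the failure of the MMS star property.

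The main obstacle is balancing this perturbation. A weight shift at a single point $p \in V(D_1)$ affects the $q+1$ blocks of $D_1$ through $p$ and the $q^2$ blocks of $D$ outside $D_1$ through $p$. In $PG(3,q)$ the external blocks through $p$ sit in a rigid pattern inherited from the projective geometry, and any coordinated shift large enough to flip a $D_1$-block negative drags too many external blocks up into the nonnegative range, overwhelming the reduction and reproducing the tightness already visible in Corollary~\ref{cor:designstrictcounterwith2}. The Jungnickel--Tonchev construction provides an alternative incidence pattern between $V(D_1)$ and its complement which absorbs the perturbation without creating new nonnegative blocks; verifying this rests on the explicit combinatorial description of their design, after which the block-weight bookkeeping is routine.
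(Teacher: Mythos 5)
There is a genuine gap here: the proposal never identifies the configuration that actually makes the argument work, and the route it does sketch runs into a quantitative tie that it acknowledges but does not resolve. The paper does \emph{not} perturb the two-weight characteristic-function weighting of a full $2$-$(q^2+q+1,q+1,1)$ subdesign. Instead, the structural input extracted from the Jungnickel--Tonchev construction is a set $S$ of only $q^2+q$ blocks forming a $2$-$(q^2+q+1,q+1,1)$ design \emph{minus one block} $a$, with the essential extra property that $a$ is not a block of the ambient design (so $S$ cannot be completed inside it). One then uses a three-level weighting determined by this configuration: weight $q^2$ on the $q+1$ points of the deleted block $a$, weight $(q+1)(q^3-1)$ on the $q^2$ remaining points covered by $S$, and weight $-q^2(q+1)$ on the other $q^3$ points. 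A direct check shows the nonnegative blocks are exactly the $q^2+q$ blocks of $S$ (every other block has weight at most $-(q+1)$, and the would-be block $a$ is not present to spoil the count), which is one fewer than the star size $q^2+q+1$. No perturbation or balancing is needed; the deficit of one comes for free from the missing block.

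By contrast, your plan starts from a full subdesign $D_1$, where the two-weight weighting yields exactly $q^2+q+1$ nonnegative blocks, and tries to push one block of $D_1$ negative by a compensated shift. If you run the numbers, external blocks have weight at most $-(q^2+q)$ under the base weighting, while a block of $D_1$ has weight $(q+1)q^3$; redistributing enough mass to flip one $D_1$-block negative raises a typical external block by roughly $q(q+1)$, which exactly consumes the available margin. So the perturbation sits precisely at a tie, and whether it can be made to work would depend on fine incidence information that the proposal defers entirely to ``routine bookkeeping.'' That deferred step is the whole content of the lemma, so as written the proposal does not constitute a proof.
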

\begin{proof}
  Consider a $2$-$(q^3+q^2+q+1, q+1, 1)$ design $(\cV, \cB)$ that
  contains a set $S$ of $q^2+q$ blocks that is isomorphic to
  a $2$-$(q^2+q+1, q+1, 1)$ design missing one block $a$ such that $S$ cannot be
  extended to a $2$-$(q^2+q+1, q+1, 1)$ design contained in $(\cV, \cB)$.
  Such designs can be
  constructed by a careful use of the construction given in~\cite[Lemma
  1.1]{Jungnickel2010}.

Let $\delta$ denote the set of elements in the deleted block $a$.  Use
$\beta$ to denote the set of elements that are the blocks of $S$, but
are not in $a$. Let $\gamma$ be the remaining elements; these
are the elements in $\cV \setminus \cup_{b \in S}b$. Clearly,
\[
|\delta| =q+1, \quad |\beta| = q^2, \quad |\gamma| = q^3.
\]
Construct a zero-sum weighting of $(\cV, \cB)$ as follows, for $v \in \cV$
\begin{align*}
\wt(v) = 
\begin{cases}
q^2, & \textrm{if $v \in \delta$; } \\
(q+1)(q^3-1), & \textrm{if $ v \in \beta$;}\\
-q^2(q+1), & \textrm{if $ v \in \gamma$.}
\end{cases}
\end{align*}

Any block that is contained in $S$ will contain only elements of type $\delta$
or $\beta$, so the weight of these $q^2+q$ blocks will be nonnegative.
Any block not in $S$ will either contain one element from $\delta$ and
$q$ elements elements from $\gamma$, or at most $q$ elements from
$\delta$ and the remaining elements from $\gamma$. In the first case,
the weight of the block is
\[
(q+1)(q^3-1) + (- q) q^2(q+1) = -(q+1) ,
\]
and in the second case it is no more than
\[
-q^2(q+1) + q^3 = -q^2.
\]
Hence, exactly $q^2+q$ of the blocks of $(\cV, \cB)$ have
nonnegative weight, but the size of a star is
\[
\frac{q^3+q^2+q}{ q } = q^2+q+1.
\]
Thus the design $(\cV, \cB)$ does not have the MMS star property.
\qed \end{proof}


The final example in this section uses a generalization of
$2$-$(v,k,1)$ designs.  A pair $(\cV, \cB)$ is a $t$-$(v,k,\lambda)$
design if $\cV$ is a $v$ set (usually assumed to be $\{1, \dots, v\}$)
and $\cB$ is a set of blocks, each of size $k$, with the property that
every $t$-set of elements from $\cV$ occurs in exactly $\lambda$
blocks. The number of blocks in a $t$-$(v,k,\lambda)$ design and 
the replication number are
\begin{align*}
  \lambda\frac{\binom{v}{t}}{\binom{k}{t}} && \text{ and } && \lambda\frac{\binom{v-1}{t-1}}{\binom{k-1}{t-1}}.
\end{align*}

From any $t$-$(v,k,\lambda)$ design $(\cV, \cB)$, it is possible to
build another design called the \textsl{derived design}. This is
formed by fixing an element from $\cV$, then taking all the blocks
from $\cB$ that contain the fixed element. If the fixed element is
removed from these blocks, what remains is a
$(t-1)$-$(v-1,k-1,\lambda)$ design.

The Witt design is a well-known example of a $4$-$(23,7,1)$
design~\cite{Witt1937}, see also ~\cite[Section 11.4]{Brouwer1989}.
The parameter set of the derived design of the Witt design is
$3$-$(22,6,1)$. In~\cite[Section 5.4]{Godsil2015} it is shown that
this design has the strict EKR star property.

\begin{lemma}
The derived design of the Witt design does not have the MMS star property.
\end{lemma}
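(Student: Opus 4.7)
The plan is to exhibit a zero-sum weighting on the $22$ points of the derived design (a Steiner system $S(3,6,22)$ with $77$ blocks and replication number $\lambda_1 = 21$) under which fewer than $21$ blocks are nonnegative. I will take a two-valued weighting with one value on an arbitrary fixed block $B$ and the other on its complement; this is in the same spirit as the constructions in Theorems~\ref{thm:designcounterwith2} and~\ref{thm:designcounterwith3}, but it exploits the $16$ blocks lying in $\cV \setminus B$ rather than an embedded Steiner subdesign.

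The first step is to pin down the intersection distribution of two blocks. Because $\lambda = 1$ any two blocks meet in at most $2$ points, and writing $n_j$ for the number of blocks $B' \neq B$ with $|B \cap B'| = j$ the three standard two-way counts give
\begin{align*}
\sum_{j} n_j = 76, \qquad \sum_{j} j\, n_j = 6 \cdot 20 = 120, \qquad \sum_{j} \binom{j}{2} n_j = \binom{6}{2}(\lambda_2 - 1) = 60,
\end{align*}
where $\lambda_2 = \binom{20}{1}/\binom{4}{1} = 5$. Solving yields $n_0 = 16$, $n_1 = 0$, $n_2 = 60$; in particular, the $16$ blocks disjoint from $B$ lie entirely in $\cV \setminus B$.

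I then assign each point of $B$ weight $-8$ and each of the $16$ points of $\cV \setminus B$ weight $3$; the total weight is $6(-8) + 16(3) = 0$. The weight of $B$ itself is $-48$; a block meeting $B$ in exactly two points has weight $2(-8) + 4(3) = -4$; and a block disjoint from $B$ has weight $6 \cdot 3 = 18$. Hence the nonnegative blocks are precisely the $16$ blocks disjoint from $B$, and since $16 < 21 = \lambda_1$, the MMS star property fails.

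The only genuinely non-routine step is establishing that $n_1 = 0$, i.e.\ that two blocks of $S(3,6,22)$ never meet in exactly one point; once this intersection distribution is in hand the integer weights $-8$ and $3$ are essentially forced by demanding that blocks meeting $B$ in two points be strictly negative while blocks disjoint from $B$ are strictly positive, and the final verification is immediate.
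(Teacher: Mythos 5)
Your proof is correct and is essentially the paper's own argument: the paper also fixes a block $B$, notes that every other block meets $B$ in $0$ or $2$ points with exactly $16$ disjoint blocks, and uses the two-valued zero-sum weighting $(-16,6)$ on $B$ and its complement, which is just twice your $(-8,3)$. The only difference is that you derive the intersection distribution $n_0=16$, $n_1=0$, $n_2=60$ by explicit double counting, whereas the paper cites it as a known property of the $3$-$(22,6,1)$ design.
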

\begin{proof}
  Let $(\cV, \cB)$ be the derived design from the Witt design. This
  design has parameter set $3$-$(22,6,1)$, contains $77$ blocks, and
  the size of a star is $21$. This design has the property that the
  set of all blocks that are disjoint from a given block forms a
  $2$-$(16,6,2)$ design.

  To show that $(\cV, \cB)$ does not have the MMS star property, we
  will build a weighting of the elements such that fewer than $21$
  blocks have a nonnegative weight. Fix a block $B$ and consider the
  $2$-$(16,6,2)$ design that is formed by all the blocks that do not
  intersect $B$. Assign the elements in the
  $2$-$(16,6,2)$ design a weight of $6$ (these are the elements in the
  complement of $B$). Weight each of the elements in the fixed block
  $B$ by $-16$.  Clearly this forms a zero-sum weighting.

  The weight of the block $B$ is $6 (-16) = -96$.  A simple check shows
  that each block $(\cV, \cB)$, other than $B$, intersects $B$ in
  either $0$ or $2$ elements. The weight of any block that meets $B$
  in $2$ elements is $2(6) + 4\cdot(-16) = -52$. The weight of the
  blocks that do not intersect $B$ is $6(6) = 36$. Thus, these $16$
  blocks of the $2$-$(16,6,2)$ design are only nonnegative blocks with
  this weighting.
\qed \end{proof}

This example is interesting since the difference between the number of
nonnegative blocks and the size of a star is relatively large.

\section{Constructions of counterexamples for orthogonal arrays}

In this section we consider orthogonal arrays as partial
geometries. An orthogonal array $\OA(m,n)$ is an $m \times n^2$ array, and the entries
in the array are the elements from the set $\{0, \dots ,n-1\}$. This
set is called the \textsl{alphabet} of the orthogonal array. The array
has the property that for any two rows the pairs of entries in the
columns are never repeated; so between any two rows, each of the $n^2$
possible pairs of elements from the alphabet occurs in exactly one
column.

Orthogonal arrays are partial geometries. The lines of the partial
geometry are the columns of the array. The points are ordered pairs
$(r,i)$ with $r \in \{1,\dots, m\}$ and $i\in \{0,\dots, n-1\}$; if a
line contains the point $(r,i)$, then the column corresponding to the
line has the entry $i$ in row $r$. This partial geometry has $n^2$
lines and $mn$ points.  Each line contains exactly $m$ points, and
each point occurs in exactly $n$ lines.  For any point not on a line,
there are exactly $m-1$ lines that intersect the line and contain the
point. So an $\OA(m,n)$ is a partial geometry with parameters $(m-1, n-1, m-1)$;

Two columns in an orthogonal array intersect if they have the
same entry in the same row. A star is the set of all columns that have
an $i$ in position $r$; a star has size $n$.

By Proposition~\ref{prop:nonmms_implies_small_t}, if an $\OA(m,n)$ does
not have the MMS star property, then
\begin{align}\label{eq:boundforOAs}
n \leq 1/2 \left( \sqrt{8m^3-12m^2-36m+49} +2m^2-2m-5 \right).
\end{align}

\begin{lemma}\label{lem:OAcounterwith2}
 Let $A$ be an $\OA(m,n)$. If $A$ has a subarray that is an $\OA(m,n')$ with
$n <mn'$ and
$(n')^2<n$,
then $A$ does not have the MMS star property.
\end{lemma}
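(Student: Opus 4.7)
The plan is to mimic the construction used in Theorem~\ref{thm:designcounterwith2} for designs, with the sub-OA playing the role of the subdesign. Without loss of generality I assume the sub-$\OA(m,n')$ uses the restricted alphabet $\{0,\ldots,n'-1\}$, and I define a weighting of the points of $A$ by giving each point $(r,i)$ with $i<n'$ the weight $n-n'$ and each remaining point $(r,i)$ with $i \geq n'$ the weight $-n'$. A quick counting check, $mn'(n-n') + m(n-n')(-n') = 0$, confirms that the weighting is zero-sum.

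The goal is then to show that under this weighting the only nonnegative columns are the $(n')^2$ columns of the sub-OA, so that the MMS star property fails by comparison with the star size $n$. Every column of the sub-OA has all $m$ of its entries in the restricted alphabet, hence has the positive weight $m(n-n')$. The crux of the argument -- and the step I expect to need the most care -- is the combinatorial observation that any column $c$ of $A$ which is not in the sub-OA has at most one entry in $\{0,\ldots,n'-1\}$. The reason is the defining OA pair-uniqueness property: if two distinct rows $r, s$ satisfied $c_r, c_s \in \{0,\ldots,n'-1\}$, then the ordered pair $(c_r, c_s)$ of values in rows $(r,s)$ would already appear in some column of the sub-OA, and so could not reappear in $c$.

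Given this, any column outside the sub-OA has weight at most $(n-n') + (m-1)(-n') = n - mn'$, which is strictly negative by the hypothesis $n < mn'$. Hence the set of nonnegative columns coincides with the $(n')^2$ columns of the sub-OA, and the hypothesis $(n')^2 < n$ makes this strictly smaller than the star size $n$. Thus $A$ fails the MMS star property, and the only genuinely new input beyond the bookkeeping is the pair-uniqueness argument, which is immediate from the orthogonal array axioms.
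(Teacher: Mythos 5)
Your proof is correct and follows essentially the same route as the paper: the same two-valued zero-sum weighting (yours is just the paper's weighting scaled by $1/m$) and the same key observation that a column outside the sub-OA meets its point set at most once. The only difference is that you spell out the pair-uniqueness justification for that observation, which the paper states without proof.
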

\begin{proof}
  Let $B$ be the subarray of $A$. There are $mn'$ points in $B$ and
  $m(n-n')$ points in $A$ that are not in $B$.

  Assign a weight of $m(n-n')$ to each of the points in $B$, and a
  weight of $-mn'$ to the remaining points in $A$. This is a zero-sum
  weighting. The weight of the columns in $B$ is positive. Any column
  not in $B$ contains at most one point from $B$. Thus its weight is no more than
\[
m(n-n') + (-mn')(m-1) = m ( n-mn') <0.
\]
So only the $(n')^2$ columns of $B$ are nonnegative. Since $(n')^2
<n$, this orthogonal array does not have the MMS star property.
\qed \end{proof}

In Section~\ref{sec:LatinSquares}, two examples of orthogonal arrays
that meet the conditions of Lemma~\ref{lem:OAcounterwith2} are given.

\begin{lemma}\label{lem:OAnotMMS}
  Let $A$ be an $\OA(m, (m-1)^2)$. Assume that $A$ contains a set of
  $(m-1)^2-1$ columns that is isomorphic to an $\OA(m, m-1)$ minus one
  column, and further assume that this column does not occur in
  $A$. Then $A$ does not have the MMS star property.
\end{lemma}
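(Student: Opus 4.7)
The plan mirrors the construction in Lemma~\ref{lem:counterexample_jungnickel_tonchev}. Write $a$ for the column missing from the hypothetical $\OA(m, m-1)$ containing $S$, so that $S \cup \{a\}$ is a full $\OA(m, m-1)$ and, by hypothesis, $a \notin A$. I partition the $m(m-1)^2$ points of $A$ into three classes: $\delta$, the $m$ points lying on $a$; $\beta$, the $m(m-2)$ points appearing in the subarray $S \cup \{a\}$ but not on $a$; and $\gamma$, the remaining $m(m-1)(m-2)$ points. I then assign weight $m-2$ to every point of $\delta \cup \beta$ and weight $-1$ to every point of $\gamma$; a quick count gives $m(m-1)(m-2) - m(m-1)(m-2) = 0$, so this is a zero-sum weighting.

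The crux is to show that every column $c$ of $A$ with $c \notin S$ contains at most one point of $\delta \cup \beta$. If $c$ had two such points, at rows $r_1 \neq r_2$ with entries $i_1, i_2$ lying in the alphabets used by the subarray, then since $S \cup \{a\}$ is an $\OA(m, m-1)$ there would be a unique column $c'$ of $S \cup \{a\}$ having $i_1$ at row $r_1$ and $i_2$ at row $r_2$. The distinct lines $c$ and $c'$ would then share two points, contradicting the partial geometry axiom that every pair of distinct points lies on at most one line. So $c = c' \in S \cup \{a\}$, and since $c \notin S$ we would need $c = a$, contradicting the hypothesis that $a$ does not occur in $A$.

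In the subarray $\OA(m, m-1)$ the non-adjacency count $(m-2)((m-1) - m + 1) = 0$ shows that every pair of distinct columns shares exactly one point. Hence each column of $S$ contains one point of $\delta$ and $m-1$ points of $\beta$, giving weight $1 \cdot (m-2) + (m-1)(m-2) = m(m-2) > 0$, whereas any column $c \notin S$ has weight at most $(m-2) + (m-1)(-1) = -1 < 0$. Thus the nonnegative columns of $A$ are exactly the $(m-1)^2 - 1$ columns of $S$, which is strictly fewer than the star size $n = (m-1)^2$; so $A$ fails the MMS star property. The only real obstacle is the structural argument of the second paragraph, which proceeds exactly as in the design analogue; once it is in hand the weight bookkeeping is immediate.
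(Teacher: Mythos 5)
There is a genuine gap, and it sits exactly at the step you call the crux. Your claim that every column $c\notin S$ of $A$ meets $\delta\cup\beta$ in at most one point is false. The intersection argument works when at least one of the two points lies in $\beta$ (then the unique column $c'$ of the sub-array through the pair lies in $S$, so $c=c'\in S$, a contradiction), but it breaks down when both points lie in $\delta$: in that case $c'=a$, and since $a$ is \emph{not} a column of $A$ it is not a line of the partial geometry, so ``$c$ and $c'$ share two points'' violates nothing. In fact such columns must exist: for any two rows $r_1\neq r_2$, the pair of entries of $a$ in those rows occurs in exactly one column of $A$ by the orthogonal array property, and that column cannot lie in $S$ (inside the sub-$\OA(m,m-1)$ that pair occurs only in $a$). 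So $A\setminus S$ contains columns with $k$ points of $\delta$, $2\le k\le m-1$, and $m-k$ points of $\gamma$. Under your weighting such a column has weight $k(m-2)-(m-k)=k(m-1)-m\ge m-2>0$ for $m\ge 3$, so it is nonnegative. Covering all $\binom{m}{2}$ pairs of $\delta$-points forces at least two such extra columns, giving at least $(m-1)^2+1$ nonnegative columns in total --- your weighting is not a counterexample at all.

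This is precisely why the paper's weighting is not uniform on $\delta\cup\beta$: it assigns the small weight $1$ to $\delta$, the large weight $m(m-1)-\tfrac{1}{m-2}$ to $\beta$, and $-m$ to $\gamma$. With those choices a column of $S$ (one $\delta$-point and $m-1$ $\beta$-points) is still positive, a column not in $S$ with one $\beta$-point and $m-1$ $\gamma$-points has weight $-\tfrac{1}{m-2}<0$, and --- the case your argument misses --- a column with up to $m-1$ points of $\delta$ and at least one point of $\gamma$ has weight at most $(m-1)\cdot 1-m=-1<0$. (Note also that the design analogue you invoke does not make your at-most-one-point claim either; it likewise allows blocks meeting the deleted block $a$ in up to $q$ points and compensates with unequal weights on $\delta$ and $\beta$.) Your first and third paragraphs --- the partition, the zero-sum check, and the fact that every column of $S$ meets $a$ in exactly one point --- are fine; the proof needs the case split and a reweighting of $\delta$ versus $\beta$ to go through.
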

\begin{proof}
  Let $S$ be the set of columns that is isomorphic to an $\OA(m, m-1)$
  minus one column, call this column $C$. (We also assume that $C$ is
  not a column of $A$.)  Let $\delta$ be the set of the $m$ points in
  the column $C$, and let $\beta$ be the points in columns of $S
  \setminus C$.  Let $\gamma$ be the remaining points in $A$. Assign
  the following weights to the point $v$ of $A$
\begin{align*}
\wt(v) = 
\begin{cases}
 1,  &\textrm{if $v \in \delta$;} \\
  m(m-1) - \frac{1}{m-2}, & \textrm{if $v \in \beta$;}\\
   -m, & \textrm{if $v \in \gamma$.}\\
\end{cases}
\end{align*}
Since there are $m$ points of type $\delta$, $m(m-2)$ points of type $\beta$,
and $m(m-1)(m-2)$ points of type $\gamma$, this is a zero-sum
weighting.

Any column in $S$ contains only points of type $\delta$ or $\beta$, so
these $m^2-1$ columns all have nonnegative weight. Any column of $A$ that
is not in $S$ either contains one point from $\beta$ and $m-1$ points
of type $\gamma$, or at most $m-1$ points of type $\delta$ and the
remaining elements from $\gamma$. In the first case, the weight of the
column is
\[
 m(m-1) -\frac{1}{m-2}  + (m-1)(-m) = -\frac{1}{m-2},
\]
and in the second case, the weight is no more than
\[
 -m + (m-1)(1) = -1.  
\]
Hence, exactly the $(m-1)^2-1$ columns of $A$ have nonnegative
weight, while the size of a star is $(m-1)^2$.
\qed \end{proof}

Next we give a construction for orthogonal arrays that fulfills the
requirements of Lemma~\ref{lem:OAnotMMS}. To start, we state (without
proof) two well-known constructions of orthogonal arrays. This first
is the result that an $\OA(m+1,m)$ is equivalent to a projective plane
of order $m$~\cite[Section II.2.3]{Colbourn2007}.

\begin{theorem}\label{thm:primeposerOA}
For $m$ a prime power there exists an $\OA(m+1,m)$.
\end{theorem}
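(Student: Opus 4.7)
The plan is to give the classical field-based construction. Since $m$ is a prime power, let $\setF_m$ denote the finite field of order $m$, and build the array with alphabet $\setF_m$. I would index the $m^2$ columns by pairs $(a,b) \in \setF_m \times \setF_m$ and construct $m+1$ rows as follows: for each $c \in \setF_m$, define a row $R_c$ whose entry in column $(a,b)$ is $a + cb$, and then add one extra row $R_\infty$ whose entry in column $(a,b)$ is $b$. This yields an $(m+1) \times m^2$ array over an alphabet of size $m$, which has the correct shape for an $\OA(m+1,m)$.

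The only thing to verify is the defining orthogonality property, namely that for any two distinct rows and any ordered pair $(x,y) \in \setF_m \times \setF_m$ there is a unique column in which those two rows display $x$ and $y$, respectively. For two finite rows $R_{c_1}$ and $R_{c_2}$ with $c_1 \neq c_2$, this asks for the unique solution $(a,b)$ of the linear system $a + c_1 b = x$, $a + c_2 b = y$; the coefficient matrix has determinant $c_2 - c_1 \neq 0$ in $\setF_m$, so the solution is unique. For the mixed case, $R_c$ versus $R_\infty$, the system $a + cb = x$, $b = y$ trivially has the unique solution $b = y$, $a = x - cy$. Hence each pair of rows is orthogonal, and the array is an $\OA(m+1,m)$.

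The main (and really only) obstacle is simply that the field structure is needed to guarantee the invertibility used above; this is precisely why the hypothesis that $m$ is a prime power enters, since otherwise $\setF_m$ does not exist and the construction collapses. One could alternatively invoke the equivalence, mentioned just before the theorem statement, between an $\OA(m+1,m)$ and a projective plane of order $m$, and then cite the existence of the Desarguesian plane $PG(2,\setF_m)$, but the direct construction above is self-contained and shorter.
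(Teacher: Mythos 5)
Your construction is correct: the rows $R_c\colon (a,b)\mapsto a+cb$ for $c\in\setF_m$ together with $R_\infty\colon (a,b)\mapsto b$ form $m+1$ rows over $m^2$ columns, and the orthogonality check via the invertibility of the $2\times 2$ system (determinant $c_2-c_1\neq 0$) and the trivial mixed case is exactly right. Note, however, that the paper does not prove this statement at all --- it explicitly says the result is stated \emph{without proof}, citing only the equivalence between an $\OA(m+1,m)$ and a projective plane of order $m$ from the Handbook of Combinatorial Designs. So there is no argument in the paper to compare against; what you have done is supply the standard self-contained field construction (equivalently, the Desarguesian plane $PG(2,\setF_m)$ viewed as an orthogonal array), which is a perfectly valid way to discharge the citation. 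Your closing remark correctly identifies the alternative route the authors actually take.
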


The next result is a simplified version of the \textsl{MacNeish
  construction} for orthogonal arrays~\cite{MacNeish1922}.  Let $A$ and $B$ be
two orthogonal arrays with $m$ rows and alphabet $n$. Denote the entry
in row $r$ and column $i$ of $A$ by $A_{r,i}$ (and similarly for
$B$). MacNeish's construction builds an $m \times n^2$ array as
follows.  Label the columns of the $m \times n^2$ array by the ordered
pairs $(i,j)$, where $i,j \in \{1,\dots,n\}$, and set the entry in row $r$ and column $(i,j)$ to be
$B_{r,i} + n A_{r,j}$.

\begin{theorem}[MacNeish's Construction]
  The $m \times n^2$ array constructed from MacNeish's construction is
  an orthogonal array, provided that $A$ and $B$ are both $m \times
  n^2$ orthogonal arrays.
\end{theorem}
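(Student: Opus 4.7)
The plan is to verify the defining property of an orthogonal array directly for the array produced by the construction: for any two distinct rows $r_1, r_2$ and any ordered pair $(v_1, v_2)$ of values in the alphabet of the new array, there is exactly one column whose entries in rows $r_1$ and $r_2$ are $v_1$ and $v_2$ respectively. Since the dimensions and the alphabet of the output are immediate from reading off the construction, only this orthogonality condition needs argument.

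First I would unpack the arithmetic. Each new entry has the form $B_{r,i} + n \cdot A_{r,j}$ with $B_{r,i}, A_{r,j} \in \{0,\dots,n-1\}$, so the new alphabet consists exactly of the values $b + na$ for $b, a \in \{0,\dots,n-1\}$, and every such value has a unique base-$n$ decomposition. Writing the prescribed targets as $v_1 = b_1 + n a_1$ and $v_2 = b_2 + n a_2$, uniqueness of this decomposition means that a column indexed by $(i,j)$ realizes $(v_1, v_2)$ in rows $(r_1, r_2)$ if and only if
\begin{align*}
B_{r_1, i} = b_1,\ B_{r_2, i} = b_2 \quad \text{and} \quad A_{r_1, j} = a_1,\ A_{r_2, j} = a_2.
\end{align*}
In other words, the compound condition on the constructed array decouples cleanly into two independent conditions, one about $B$ and one about $A$.

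Next I would apply the orthogonality hypothesis to each of $A$ and $B$ separately. Since $B$ is an orthogonal array, there is exactly one column $i$ satisfying the pair of constraints for $B$; likewise, since $A$ is an orthogonal array, there is exactly one column $j$ satisfying the pair of constraints for $A$. Therefore the column $(i,j)$ of the new array is uniquely determined, which is precisely the orthogonality requirement. Ranging over all choices of $r_1, r_2, v_1, v_2$ completes the verification.

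There is no real obstacle in this argument; the proof amounts to a change of variables that repackages two independent orthogonalities into a single compound one, and the only step worth stating carefully is the bijection between the new alphabet and $\{0,\dots,n-1\}^2$ via base-$n$ expansion. This decoupling works cleanly precisely because the alphabets of $A$ and $B$ have the same size $n$, so I would make sure to flag this point when writing the proof out in full.
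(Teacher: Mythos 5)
Your proof is correct: the base-$n$ decomposition decouples the orthogonality condition for the product array into the two orthogonality conditions for $A$ and $B$ separately, and existence and uniqueness of the column $(i,j)$ follow. The paper states this result explicitly without proof (it is the classical MacNeish argument), and what you have written is exactly the standard verification one would supply; the only cosmetic point is that the output array has $n^4$ columns and alphabet of size $n^2$, i.e.\ it is an $\OA(m,n^2)$, which your argument in fact establishes.
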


Note that in this construction, we can assume without loss of
generality that the first column of $A$ is all zeros, then the
submatrix indexed by the columns $(i,1)$ is a copy of $B$.
See~\cite[Section 5.5]{Godsil2015} for an example
of an orthogonal array $\OA(3, 4)$ that does not have the strict EKR star property.

%

A variation of this construction
builds orthogonal arrays that meet the conditions of
Lemma~\ref{lem:OAnotMMS}. We call this construction the
\textsl{shifted MacNeish construction}.

\begin{lemma}\label{lem:OAexist}
  For any prime power $m$, there exists an $\OA(m, (m-1)^2)$ that contains a set
  of $(m-1)^2-1$ columns that is isomorphic to an $\OA(m, m-1)$ minus
  one column $C$, and that $C$ is not a column of $A$. 
\end{lemma}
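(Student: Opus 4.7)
The plan is to use a modification of MacNeish's product, which the paper terms the \emph{shifted MacNeish construction}, to build the array $A$. As a starting point, the standard MacNeish product of two copies of $\OA(m,m-1)$ (which exists for any prime power $m$ by truncating the $\OA(m+1,m)$ guaranteed by Theorem~\ref{thm:primeposerOA}) already produces an $\OA(m,(m-1)^2)$ of $(m-1)^4$ columns, indexed by pairs $(i,j) \in \{1,\dots,(m-1)^2\}^2$, in which the columns with $j$ fixed form a sub-$\OA(m,m-1)$ (after a row-wise relabeling of the alphabet). My plan is to perturb a single designated fibre of these sub-arrays so that exactly one column $C$ is ejected from the fibre \emph{and} does not reappear anywhere else in the array.

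Concretely, I would use the field structure of $\setF_q$, with $q=m$, to define a row-dependent additive shift applied only to columns $(i_0,1)$ in the distinguished fibre $j=1$. The shift is engineered so that the row pattern of $C$ is rerouted onto the row pattern of a second column lying in a different fibre; to restore the orthogonal-array pair property I would simultaneously relocate that second column in a carefully choreographed swap. Verification would then proceed in three parts: first, check row-pair by row-pair that the swap preserves the pair property, so that $A$ is a genuine $\OA(m,(m-1)^2)$; second, observe that the $(m-1)^2-1$ unmodified columns of the distinguished fibre form a subarray isomorphic to $\OA(m,m-1)$ with column $C$ removed; and third, deduce from the pair-uniqueness of the newly constructed OA that the ejected $C$ cannot reappear in any other column of $A$.

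The main obstacle is engineering the shift itself. The pair-property of an orthogonal array is so rigid that altering a single column forces a precise chain of compensating changes, and these must all be shown to close up into a valid modification. This is precisely where the prime-power hypothesis on $m$ is essential, since the shift will be defined using both the additive and multiplicative structure of $\setF_q$. Demonstrating that the shift works uniformly across all prime powers, and in particular that $C$ cannot slip back into $A$ via some fibre other than the distinguished one, is the technical heart of the argument. The construction parallels the design-theoretic shift used in Lemma~\ref{lem:counterexample_jungnickel_tonchev} via~\cite[Lemma~1.1]{Jungnickel2010}, and I would expect the same sort of delicate parameter-chasing to be needed here.
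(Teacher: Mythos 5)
There is a genuine gap: your proposal is a plan that defers exactly the step that constitutes the proof. You correctly identify the framework (a perturbed MacNeish product of two copies of an $\OA(m,m-1)$), but you never define the perturbation, and the mechanism you sketch --- rerouting the row pattern of $C$ onto a column in a different fibre and then performing a ``carefully choreographed swap'' of that second column to restore the pair property --- is both unspecified and unnecessary. The construction that actually works is much simpler and involves no compensating swap: take the $\OA(m,m-1)$ array $A$ with first column all zeros, change the \emph{single entry} $A_{1,1}$ from $0$ to $m-1$ to obtain $B$, and form the MacNeish product with entries $B_{r,i}+(m-1)A_{r,j}$. The pair property survives automatically, because the only effect of the modification is to add $m-1$ to the row-$1$ entries of the columns $(1,j)$, and $x\mapsto x+(m-1) \bmod (m-1)^2$ permutes the set $\{0,m-1,2(m-1),\dots\}$ of values those entries take; so the multiset of (row $1$, row $r$) pairs contributed by that fibre is unchanged. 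The fibre $\{(i,1)\}$ is then a copy of $B$, whose $(m-1)^2-1$ unmodified columns form an $\OA(m,m-1)$ minus the all-zeros column $C$, and $C$ occurs nowhere in the product since no column of $B$ is all zeros. None of this appears in your write-up, and you explicitly flag the verification as an open ``technical heart,'' so the proposal cannot be credited as a proof.

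Two further inaccuracies. First, the multiplicative structure of $\setF_q$ plays no role anywhere: the field hypothesis is used only to guarantee existence of the base array (equivalently a projective plane of order $m-1$), and the shift itself is pure modular arithmetic on the alphabet $\{0,\dots,(m-1)^2-1\}$. Second, an $\OA(m,m-1)$ is \emph{not} obtained ``by truncating the $\OA(m+1,m)$'': deleting rows changes the number of rows, not the alphabet size $m$, so truncation yields arrays over an $m$-letter alphabet. The array you need is equivalent to a projective plane of order $m-1$, so the relevant hypothesis is really on $m-1$; you should not paper over this with an appeal to Theorem~\ref{thm:primeposerOA} as stated.
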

\begin{proof}
  Let $A$ be an orthogonal array with $m$ rows and $(m-1)^2$ columns;
  this exists from Theorem~\ref{thm:primeposerOA} and $A$ is
  equivalent to the projective plane of order $m-1$. We may assume
  that the first column of $A$ is all zeros.  The first step in this
  construction is to change the first entry of the first row of $A$
  from $0$ to $m-1$, call this new array $B$.

  Next form an $\OA(m,(m-1)^2)$ by applying MacNeish's construction to
  $B$ and $A$. The columns of this $m \times (m-1)^4$ array will be
  indexed by ordered pairs $(i,j)$ with $i,j \in \{1,\dots, (m-1)^2\}$.
  The entry in row $r$ and column $(i,j)$ of this new array is
  $B_{r,i} + (m-1)A_{r,j}$. The entries in this array are the same as
  in MacNeish's construction, except that the entries in row $1$ and
  column $(1,i)$ for $i = 1,\dots, m-1$ will be $m-1$ larger (modulo
  $(m-1)^2$). So all the pairs from the alphabet still occur between the
  first row and any other row and this construction will produce an
  orthogonal array.

  The columns indexed by $(i,1)$ for $i = 1,\dots, m-1$ in the new
  array form an $\OA(m, m-1)$ minus one column $C$ (the all zeros
  column), and the all zeros column is not a column of the new
  array. Thus, by Lemma~\ref{lem:OAnotMMS}, the new $m \times (m-1)^4$
  orthogonal array does not have the MMS star property.
\qed \end{proof}

Below is an example of this construction. The arrays
\[
A = \left[
\begin{matrix}
0&0&1&1 \\
0&1&0&1 \\
0&1&1&0
\end{matrix}
\right],
\qquad 
B=\left[
\begin{matrix}
2&0&1&1 \\
0&1&0&1 \\
0&1&1&0
\end{matrix}
\right]
\]
can be used to construct the following $\OA(3,4)$, which does not have
the MMS star property
\begin{align}\label{eq:OA(3,4)counterexample}
\left[
\begin{matrix}
2011&2011&0233&0233 \\
0101&2323&0101&2323 \\
0110&2332&2332&0110 
\end{matrix}
\right].
\end{align}

\section{MMS star property for Latin squares}
\label{sec:LatinSquares}

The orthogonal arrays with three rows are a special subclass of
orthogonal arrays; any such orthogonal array corresponds to a Latin
square. If $(i,j,k)$ is any column in an orthogonal array with three
rows and an alphabet of size $n$, then it is possible to construct an
$n \times n$ Latin square by setting the $(i,j)$-entry of the square
to be $k$. An $\OA(3,n)$ is a partial geometry with parameters $(2,
n-1, 2)$.

The bound in Equation~\eqref{eq:boundforOAs} indicates that for $n\geq
8$ any $\OA(3,n)$ has the MMS star property. 
The array given at
(\ref{eq:OA(3,4)counterexample}) is an example of an $\OA(3,4)$ which
does not have the MMS star property, so it is clear that not every
$\OA(3,n)$ has the MMS star property. 
In this section we consider orthogonal arrays with three rows and an 
alphabet of size no larger than $7$.

A set of entries in the Latin square with one entry selected from each
row and each column is called a \textsl{transversal} if no two of the
entries are equal. A transversal is equivalent to a spread. If the
entries of a Latin square can be partitioned into disjoint
transversals, then the Latin square is said to be
\textsl{resolvable}. If a Latin square is resolvable, the disjoint
transversals of the Latin square form a partition of the points of the
orthogonal array into spreads. Hence, any resolvable Latin square has the MMS star property.

There is only one Latin square of order $2$; it (trivially) contains
an $\OA(3,1)$ subarray that meets the conditions of
Lemma~\ref{lem:OAcounterwith2}, so it does not have the MMS star
property. There is only one Latin square of order $3$, since it is
resolvable it has the MMS star property. There are two non-isomorphic
Latin squares of order $4$, one is resolvable, and hence has the MMS
star property. The other Latin square corresponds to the array given
in \ref{eq:OA(3,4)counterexample} and does not have the MMS star
property. There are also two Latin squares of order $5$, one is
resolvable and has the MMS star property. The other order $5$ Latin
square has a $2\times 2$ subsquare. By Lemma~\ref{lem:OAcounterwith2},
this Latin square does not have the MMS star property.

The Latin squares of order $6$ are more difficult than the smaller
arrays. There are $12$ nonisomorphic Latin squares of this size. Four of the
Latin squares of order $6$ have a subsquare that is a Latin square of
order $3$. Next we will see that this implies that they do not have
the strict MMS star property.

\begin{proposition}
  Let $A$ be an $\OA(3,6)$ and assume that $A$ contains an $\OA(3,3)$
  subarray, then $A$ does not have the strict MMS star property.
\end{proposition}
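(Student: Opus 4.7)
The plan is to construct an explicit zero-sum weighting of $\cV$ under which every point has at least one negatively-weighted column through it. Mirroring Lemma~\ref{lem:OAcounterwith2}, let $B$ denote the $\OA(3,3)$ subarray and $\cV_B \subset \cV$ its $9$-point set, and set $\wt(v) = +1$ for $v \in \cV_B$ and $\wt(v) = -1$ for $v \in \cV \setminus \cV_B$. Since both parts of the partition have size $9$, this is zero-sum.

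The crux is to prove that every column of $A$ outside $B$ meets $\cV_B$ in exactly one point. Writing $V_r \subset \{0,1,\dots,5\}$ for the set of $3$ values appearing in row $r$ of $B$, we have $\cV_B = \{(r,i) : i \in V_r\}$, and setting $k(C) := |C \cap \cV_B| = |\{r : C_r \in V_r\}|$ I would double-count:
\begin{align*}
\sum_{C} k(C) = 9 \cdot 6 = 54 \quad \text{and} \quad \sum_{C} \binom{k(C)}{2} = 3 \cdot 9 = 27.
\end{align*}
The first identity holds because each of the $9$ points of $\cV_B$ lies on $n = 6$ columns of $A$; the second because for each of the $3$ unordered pairs of rows $\{r,r'\}$, each of the $9$ pairs in $V_r \times V_{r'}$ occurs in a unique column of $A$. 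Since $k(C) = 3$ for each of the $9$ columns of $B$, subtracting their contributions gives $\sum_{C \notin B} k(C) = 27$ and $\sum_{C \notin B} \binom{k(C)}{2} = 0$, which together force $k(C) = 1$ on every one of the $27$ columns outside $B$. Consequently $\wt(C) = 3$ for $C \in B$ and $\wt(C) = -1$ for $C \notin B$.

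The conclusion is then immediate. A point $v \in \cV_B$ lies on $n = 6$ columns of $A$, of which $n' = 3$ are in $B$ and the remaining $3$ are outside $B$, hence negative. A point $v \notin \cV_B$ lies on no column of $B$ at all (since the columns of $B$ are subsets of $\cV_B$), so all $6$ columns through $v$ are negative. In either case there is a negatively-weighted column through $v$, so the strict MMS star property fails. The main technical obstacle is the double-count itself, specifically pinning down $k(C) = 1$ exactly (rather than merely $\leq 1$) on the non-$B$ columns; this boundary behaviour reflects the fact that the parameters $\OA(3,6) \supset \OA(3,3)$ satisfy $n < mn'$ but fail the strict inequality $(n')^2 < n$ required by Lemma~\ref{lem:OAcounterwith2}.
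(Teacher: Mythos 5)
Your proof is correct, but it takes a genuinely different route from the paper's. The paper uses a three-valued weighting ($7/4$ on eight points of the subsquare, $-5$ on the ninth, $-1$ on the nine outside points), which leaves exactly $6$ nonnegative columns --- the same as the star size --- none of which pass through a common point; your weighting is the simpler $\pm 1$ indicator of the subsquare's point set, which leaves $9$ nonnegative columns but still ensures that every point lies on a negative column, which is all the strict property requires. Your double count is valid, though heavier than necessary: you only need $k(C)\le 1$ for columns $C\notin B$, and this follows in one line from the orthogonal array axiom, since a column outside $B$ containing two points $(r,i),(r',j)$ of $\cV_B$ would repeat a pair of $V_r\times V_{r'}$ already realized by a column of the subarray $\OA(3,3)$; the weight of such a column is then $2k(C)-3\le -1<0$ without needing $k(C)=1$ exactly. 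What the paper's more elaborate weighting buys is sharpness: it exhibits an MMS set of size exactly $t+1=6$ that is not a star, which matters for the surrounding discussion of whether order-$6$ Latin squares might fail even the non-strict MMS star property, whereas your weighting says nothing in that direction. What your approach buys is transparency and the observation that the failure already follows from the crude two-weight construction in the spirit of Lemma~\ref{lem:OAcounterwith2}, precisely at the boundary where that lemma's hypothesis $(n')^2<n$ fails.
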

\begin{proof}
  Let $B$ be the $\OA(3,3)$ subarray in $A$.  There are $9$ elements
  of $A$ that do not occur in $B$; let $\gamma$ be the set of these
  elements. Pick any single point that occurs in $B$ and let $\beta$
  be the set that contains this one point. Let $\delta$ be the set of all
  points that occur in $B$, except the one point that is in $\beta$.

Define the following zero sum weighting of the points in $A$:
\begin{align*}
\wt(v) = 
\begin{cases}
\frac{7}{4}, & \textrm{if $v \in \delta$; } \\
-5, & \textrm{if $ v \in \beta$;}\\
-1, & \textrm{if $ v \in \gamma$.}
\end{cases}
\end{align*}

The $6$ columns of $B$ that do not contain the point in $\beta$ each
have weight $\frac{21}{4}$; we claim that these are the only
nonnegative columns in $A$.

Any other column of the orthogonal array will have at most one point
from $\delta$, so its weight will be no more than
$\frac{7}{4} + 2 (-1) <0$.
\qed \end{proof}

We have no examples of Latin squares of order $6$ with a weighting
with only $5$ nonnegative lines. Thus we make the following conjecture.

\begin{conjecture}
Any Latin square of order $6$ has the MMS star property.
\end{conjecture}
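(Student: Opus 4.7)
My plan is to reduce the conjecture to a finite check, since there are exactly $12$ nonisomorphic Latin squares of order $6$, and dispatch each class either by sharpening the Section~\ref{sec:generalbounds} machinery or by a direct linear-feasibility test. As a starting point, Proposition~\ref{prop:nonmms_implies_small_t} specialized to $(s,t,\alpha)=(2,5,2)$ gives only $(t-3)(t-4) < 12$, which is comfortably satisfied at $t=5$, so the general bound is powerless here. Lemma~\ref{lem:OAcounterwith2} is also inapplicable: it would require a sub-$\OA(3,n')$ with $(n')^2 < 6$, forcing $n' \le 2$, and simultaneously $6 < 3n'$, forcing $n' \ge 3$ --- a contradiction. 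Hence the existing results neither confirm nor refute the conjecture on their own.

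The first step of the plan is to sharpen Proposition~\ref{prop:nonmms_implies_small_t} in the very special case $\alpha = 2$. Here the set of lines meeting both $h_1$ and $h_2$ off $p$ in Lemma~\ref{lem:too_negative_line} has size only $s(\alpha-1)=2$, and Lemma~\ref{lem:unified_48_to_411} forces $m \le s\alpha - s + 2\alpha - 3 = 3$ nonnegative lines off $p$. With so few objects to track, the coarse estimates in Lemmas~\ref{lem:BoundOnG_1} and~\ref{lem:too_negative_line} can plausibly be replaced by an exact case analysis over the weights of these at most three off-$p$ lines, bringing the terminal inequality down from $(t-3)(t-4) < 12$ to something like $(t-3)(t-4) < 6$. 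If this succeeds, the conjecture follows with no computation; if not, I would fall back on a direct check. For each representative Latin square $L$, form the $36 \times 18$ line--point incidence matrix $M$ of the associated $\OA(3,6)$. The statement that $L$ fails the MMS star property is equivalent to the existence of a set $S \subset \cL$ with $|S|\le 5$ and a vector $f \in \setR^{18}$ satisfying $\mathbf{1}^T f = 0$, $(Mf)_\ell \ge 0$ for $\ell \in S$, and $(Mf)_\ell \le -1$ for $\ell \notin S$ (scaling is free, so strict negativity normalizes to $\le -1$). Enumerating candidate sets $S$ modulo the autotopism group of $L$ and the involution $f \mapsto -f$, and testing feasibility of each resulting LP, disposes of each of the $12$ cases.

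The main obstacle is the four Latin squares of order $6$ that contain an order-$3$ subsquare. For these, the proposition immediately preceding the Conjecture already produces a zero-sum weighting with exactly $6$ nonnegative lines, so the conjecture lives right on the boundary and no slack is available. Any analytic refinement will be tightest precisely on these squares, and in the LP approach the corresponding feasibility problem for $|S|=5$ will be closest to admitting a solution. The cleanest route to a human-readable proof would be to extract, for each such square, an explicit LP dual certificate --- a nonnegative combination of the constraints summing to an impossible $0 > 0$ --- converting the computational verification into a short closed-form argument for the boundary cases and leaving only the eight non-subsquare Latin squares to be handled by the (presumably much easier) analytic refinement above.
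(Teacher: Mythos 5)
The paper does not prove this statement; it is posed as a conjecture precisely because the authors were unable to settle it (they state that they have no order-$6$ counterexample but also no proof), so there is no proof of record to compare yours against. Judged on its own terms, your proposal is a reasonable \emph{program} but it is not a proof, and one of its two prongs contains a concrete quantitative error. You propose to sharpen the terminal inequality of Proposition~\ref{prop:nonmms_implies_small_t}, which at $(s,t,\alpha)=(2,t,2)$ reads $(t-3)(t-4)<12$, down to ``something like $(t-3)(t-4)<6$,'' and claim the conjecture would then follow with no computation. It would not: at $t=5$ one has $(t-3)(t-4)=2$, and $2<6$ still holds, so the necessary condition for failure of the MMS star property remains satisfied and no contradiction is obtained. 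To close the case $t=5$ by this route you must drive the right-hand side all the way down to $2$ or below, i.e.\ prove that failure forces $(t-3)(t-4)<2$ --- a reduction by a factor of six rather than two --- and you give no argument that the estimates in Lemmas~\ref{lem:BoundOnG_1}--\ref{lem:too_negative_line} admit that much slack. Indeed, the existence of the $\OA(3,4)$ counterexample at \eqref{eq:OA(3,4)counterexample}, where $(t-3)(t-4)=0$, shows any such sharpened bound must still admit small values, so the refinement is genuinely delicate.

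The second prong --- enumerating candidate sets $S$ of at most $5$ nonnegative columns for each of the $12$ isomorphism classes and testing LP feasibility --- is correctly formulated (the constraint set is a homogeneous cone, so strict negativity of the complementary columns does normalize to $\le -1$), and your observations that Proposition~\ref{prop:nonmms_implies_small_t} and Lemma~\ref{lem:OAcounterwith2} are inapplicable at $n=6$ are accurate, as is your identification of the four squares with order-$3$ subsquares as the boundary cases. But none of this is carried out: no LP is solved, no dual certificate is exhibited, and the entire burden of the argument is deferred to a computation that is only described. As written, the proposal establishes nothing beyond what the paper already records, and the statement remains a conjecture.
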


\begin{conjecture}
Any Latin square of order $7$ has the strict MMS star property.
\end{conjecture}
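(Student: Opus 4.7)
The plan is computer verification over the $147$ main classes of Latin squares of order $7$. The bound of Proposition~\ref{prop:nonmms_implies_small_t_strict_version}, specialized to $(s, t, \alpha) = (2, 6, 2)$, requires $t > s\alpha + 3\alpha - 3 = 7$; since $t = 6$, the proposition does not cover this case and a case analysis is unavoidable. Representatives of the $147$ main classes of order-$7$ Latin squares can be taken from published catalogues or generated directly with \texttt{nauty}. Recall that the main-class (paratopy) equivalence is the correct one here because the partial geometry depends only on the set of $49$ ordered triples forming the columns of the orthogonal array, up to isomorphism of hypergraphs.

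For each representative $L$, one needs to decide whether the associated partial geometry has the strict MMS star property: whether, for every zero-sum weighting of the $21$ points, some point has all seven incident columns nonnegative. Failure of the strict property is equivalent to the existence of $w \in \setR^{21}$ with $\sum_v w(v) = 0$ such that the set $N := \{\ell \in \cL : \wt(\ell) < 0\}$ is a hitting family for the $21$ points. A useful reduction: if $N$ is a spread (seven pairwise disjoint columns partitioning the $21$ points, i.e.\ a full transversal resolution class) then
\[
\sum_{\ell \in N} \wt(\ell) \;=\; \sum_v w(v) \;=\; 0,
\]
contradicting $\wt(\ell) < 0$ throughout $N$. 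Hence $|N| \geq 8$ and $N$ doubly covers at least one point; this observation eliminates all spread-type configurations a priori.

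I would encode the decision as one mixed-integer linear program per Latin square: binary variables $z_\ell$ for the $49$ columns (with $z_\ell = 1$ declaring $\ell \in N$), covering constraints $\sum_{\ell \ni p} z_\ell \geq 1$ for each of the $21$ points, continuous weights $w(v)$ with $\sum_v w(v) = 0$, and big-$M$ couplings $\wt(\ell) \leq -\varepsilon + M(1 - z_\ell)$. A feasible solution is a certified counterexample to the conjecture; infeasibility certifies strict MMS for $L$. Lex-breaking over orbits of the autotopism group of $L$ and restriction to inclusion-minimal hitting sets shrinks the search considerably. For a short human-checkable certificate on a positive instance, LP duality (Farkas) says that for each violating assignment $\sigma : V \to \cL$ with $\sigma(p) \ni p$, infeasibility of the associated LP is equivalent to the existence of $\lambda_p \geq 0$ and $\mu \in \setR$ with $\sum_p \lambda_p\,\chi_{\sigma(p)} \equiv -\mu \cdot \mathbf{1}$ and $\sum_p \lambda_p > 0$; the MILP output pinpoints the $\sigma$'s for which such certificates must be exhibited.

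The main obstacle is computational rather than mathematical: the Latin squares of order $7$ with trivial or small autotopism group offer little symmetry to exploit, and the MILP may enumerate many minimal hitting sets before certifying infeasibility. A pragmatic fallback is to enumerate hitting sets $N$ of sizes $8, 9, \ldots$ in increasing order and test each by a tiny LP, discarding upfront any $N$ with kernel vector in the row-span of the incidence matrix of $N$ with the points (generalizing the spread exclusion). This combination — symmetry reduction, spread-type pruning, and size-increasing enumeration — is expected to be enough to complete the verification for all $147$ main classes and thereby settle the conjecture.
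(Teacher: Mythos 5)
The statement you are addressing is stated in the paper as a \emph{conjecture}: the authors offer no proof of it, and Proposition~\ref{prop:nonmms_implies_small_t_strict_version} indeed does not apply since, as you correctly compute, an $\OA(3,7)$ has $(s,t,\alpha)=(2,6,2)$ and the hypothesis $t>s\alpha+3\alpha-3=7$ fails. So there is no proof in the paper to compare yours against, and the real question is whether your proposal itself settles the statement. It does not: what you have written is a verification \emph{plan}, not a proof. You describe an MILP encoding, symmetry reductions, and a fallback enumeration, but you never report running the computation, never state its outcome for any of the $147$ main classes, and never exhibit the infeasibility certificates that your own Farkas discussion says would be needed. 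A finite, in-principle-decidable search is not a proof until it has been carried out and its result recorded (or, better, accompanied by checkable certificates). As it stands the conjecture remains exactly as open after your write-up as before it.

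The ingredients you do supply are sound but partial. The reduction to main classes is legitimate since the hypergraph of columns is invariant under paratopy; the spread-exclusion argument (a covering of $21$ points by $7$ triples must be a partition, whose total weight is $0$, so the negative lines cannot form a spread and hence number at least $8$) is correct and is the same observation the paper uses to dispose of resolvable Latin squares. But note two further gaps in the plan itself: first, your Farkas certificate scheme quantifies over assignments $\sigma$ choosing one negative line per point, and certifying infeasibility requires handling \emph{every} minimal covering $N$, not just those the MILP happens to visit, so you would need to argue that your enumeration of inclusion-minimal hitting sets is exhaustive; second, the $\varepsilon$/big-$M$ encoding needs a short justification that strict negativity can be replaced by $\wt(\ell)\leq-\varepsilon$ after rescaling (true, since weightings form a cone, but it should be said). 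Until the computation is executed and documented, you have a credible strategy for attacking the conjecture, not a proof of it.
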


\section{Sporadic partial geometries with $\alpha=2$}

There are only three partial geometries with $\alpha=2$ known, all of them sporadic examples \cite{DeClerck2003}: Mathon's
partial geometry of order $(8, 20, 2)$ \cite{DeClerck2002}, van Lint and Schrijver's
partial geometry (\textit{vLS geometry}) of order $(5, 5, 2)$ \cite{Lint1981} and Haemers's partial geometry
of order $(4, 17, 2)$ \cite{Haemers1981}. For Mathon's partial geometry and Haemers's 
partial geometry Proposition \ref{prop:nonmms_implies_small_t_strict_version}
implies the strict MMS star property. We conclude this section by showing that 
the vLS geometry does not have the strict MMS star property.

We shall use the description for the vLS geometry by Cameron and van Lint \cite{Cameron1982}. Let
$G$ be the subgroup of $\setZ_3^6$ generated by $(1, 1, 1, 1, 1, 1)$.
We call the elements $a+G$ with $a \in \setZ_3^6$ and $\sum_{i=1}^6 a_i = 0$ \textit{points}
and the elements $a+G$ with $a \in \setZ_3^6$ and $\sum_{i=1}^6 a_i = 1$ \textit{lines}.
We say that $a+G$ and $b+G$ are incident if there exists a $c \in
\setZ_6^3$ with only one non-zero coordinate such that $a+G =
b+c+G$. The points and lines form a partial geometry of order $(5, 5,
2)$, that has $81$ points and $81$ lines.

Define a point $a+G$, or a line $b+G$, to be incident with $(1,1,0,0,0,0)+G$  
if there is a $c$, with only one non-zero entry, such that $a+G = c+(1,1,0,0,0,0)+G$, 
or $b= c+(1,1,0,0,0,0)+G$. Let $S$ be the set of lines of vLS incident with 
$(1,1,0,0,0,0)+G$. Note that if all the points incident to $(1,1,0,0,0,0)+G$ are 
removed from the lines in $S$, then what remains is a partial geometry $\Gamma$ of order 
$(4,  1,  2)$ that contains $15$ points and $6$ lines. Let $T$ denote the points
of $\Gamma$.  All lines from vLS not in $S$ meet $T$ in
at most one point.
Put the weight $66$ on the $15$ points of $T$ 
and put the weight $-15$ on the $66$ points not in $T$.  With this weighting
the lines in $S$ have weight $5 \cdot 66 - 15 > 0$, while the lines not in
$S$ have weight at most $66 - 5 \cdot 15 < 0$. So this is an MMS set with
$6$ lines that is not a star.

\section{Further work}

In all of our counterexamples to the MMS
conjecture, the largest MMS set is a set of intersecting lines. This
raises the question if this is always the case.

\begin{question}
  Is the smallest MMS set in a partial geometry always a set of pairwise intersecting lines?
\end{question}

Our counterexamples for the partial geometries frequently used a
substructure that formed a smaller partial geometry. These
substructures where chosen because they contain lines with many
intersections. Looking for sets of lines with a high level of
intersection is related to the EKR theorem. For partial geometries, it
is known that the EKR star property holds, but the characterization of the
largest intersecting lines is not known. This leads to our next question.

\begin{question}
  Is the maximum pairwise intersecting set of lines in a partial geometry
  always either a star or another partial geometry?
\end{question}
%
%

\paragraph*{\bf Acknowledgments} 
The authors would like to thank
Klaus Metsch for pointing out the construction by Jungnickel and Tonchev used in Lemma
\ref{lem:counterexample_jungnickel_tonchev}.  The authors would also
like to thank Ameera Chowdhury for discussing MMS conjectures with
them and providing various preprints of her work. The authors would like
to thank John Bamberg for his suggestion to include a discussion of all known
partial geometries with $\alpha=2$. The authors would like to thank the referees
for their very helpful and constructive comments on the presentation of the results.
%

\begin{thebibliography}{10}

\bibitem{Alon2012}
N. Alon, H. Huang, and B. Sudakov.
\newblock Nonnegative {$k$}-sums, fractional covers, and probability of small
  deviations.
\newblock {\em J. Combin. Theory Ser. B}, 102(3):784--796, 2012.

\bibitem{Bier1984}
T. Bier.
\newblock A distribution invariant for association schemes and strongly regular
  graphs.
\newblock {\em Linear Algebra Appl.}, 57:105--113, 1984.

\bibitem{Bier1988}
T. Bier and P. Delsarte.
\newblock Some bounds for the distribution numbers of an association scheme.
\newblock {\em European J. Combin.}, 9(1):1--5, 1988.

\bibitem{Brouwer1989}
A.~E. Brouwer, A.~M. Cohen, and A.~Neumaier.
\newblock {\em Distance-regular graphs}, volume~18 of {\em Ergebnisse der
  Mathematik und ihrer Grenzgebiete (3) [Results in Mathematics and Related
  Areas (3)]}.
\newblock Springer-Verlag, Berlin, 1989.

\bibitem{Cameron1982}
P.~J. Cameron and J.~H. van Lint.
\newblock On the partial geometry {${\rm pg}(6,\,6,\,2)$}.
\newblock {\em J. Combin. Theory Ser. A}, 32(2):252--255, 1982.

\bibitem{Chowdhury2013}
A. Chowdhury, G. Sarkis, and S. Shahriari.
\newblock The {M}anickam-{M}ikl\'os-{S}inghi conjectures for sets and vector
  spaces.
\newblock {\em J. Combin. Theory Ser. A}, 128:84--103, 2014.

\bibitem{Colbourn2007}
C.~J. Colbourn and J.~H. Dinitz.
\newblock {\em Handbook of {C}ombinatorial {D}esigns}.
\newblock Chapman \& Hall/CRC, Boca Raton, FL, second edition, 2007.

\bibitem{DeClerck1995}
F.~De~Clerck and H.~Van~Maldeghem.
\newblock {\em Handbook of Incidence Geometry}, Ch. 2, Some Classes of Rank
  $2$ Geometries, pages 434--471.
\newblock Elsevier, 1995.

\bibitem{DeClerck2003}
F. De~Clerck.
\newblock Partial and semipartial geometries: an update.
\newblock {\em Discrete Math.}, 267(1-3):75--86, 2003.
\newblock Combinatorics 2000 (Gaeta).

\bibitem{DeClerck2002}
F. De~Clerck, M. Delanote, N. Hamilton, and R. Mathon.
\newblock Perp-systems and partial geometries.
\newblock {\em Adv. Geom.}, 2(1):1--12, 2002.

\bibitem{Doyen1973}
J. Doyen and R.~M. Wilson.
\newblock Embeddings of {S}teiner triple systems.
\newblock {\em Discrete Math.}, 5:229--239, 1973.

\bibitem{ErdHos1961}
P.~Erd{\H{o}}s, C. Ko, and R.~Rado.
\newblock Intersection theorems for systems of finite sets.
\newblock {\em Quart. J. Math. Oxford Ser. (2)}, 12:313--320, 1961.

\bibitem{Godsil2015}
C.~Godsil and K.~Meagher.
\newblock {\em {E}rd{\H o}s-{K}o-{R}ado Theorems: Algebraic Approaches}.
\newblock Number 149 in Cambridge Studies in Advanced Mathematics. Cambridge
  Univ. Press, December 2016.

\bibitem{Haemers1981}
W. Haemers.
\newblock A new partial geometry constructed from the {H}offman-{S}ingleton
  graph.
\newblock 49:119--127, 1981.

\bibitem{Huang2014}
H. Huang and B. Sudakov.
\newblock The minimum number of nonnegative edges in hypergraphs.
\newblock {\em Electron. J. Combin.}, 21(3):Paper 3.7, 22, 2014.


\bibitem{Jungnickel2010}
D.~Jungnickel and V.~D. Tonchev.
\newblock The number of designs with geometric parameters grows exponentially.
\newblock {\em Des. Codes Cryptogr.}, 55(2-3):131--140, 2010.

\bibitem{Kaski2008}
P. Kaski, P. R.~J. {\"O}sterg{\aa}rd, S. Topalova, and R.
  Zlatarski.
\newblock Steiner triple systems of order 19 and 21 with subsystems of order 7.
\newblock {\em Discrete Math.}, 308(13):2732--2741, 2008.

\bibitem{MacNeish1922}
H.~F. MacNeish.
\newblock Euler squares.
\newblock {\em Ann. of Math. (2)}, 23(3):221--227, 1922.

\bibitem{Manickam1988}
N.~Manickam and D.~Mikl{\'o}s.
\newblock On the number of nonnegative partial sums of a nonnegative sum.
\newblock In {\em Combinatorics ({E}ger, 1987)}, volume~52 of {\em Colloq.
  Math. Soc. J\'anos Bolyai}, pages 385--392. North-Holland, Amsterdam, 1988.

\bibitem{Manickam1988a}
N.~Manickam and N.~M. Singhi.
\newblock First distribution invariants and {EKR} theorems.
\newblock {\em J. Combin. Theory Ser. A}, 48(1):91--103, 1988.

\bibitem{Pokrovskiy2013}
A. Pokrovskiy.
\newblock A linear bound on the {M}anickam-{M}ikl\'os-{S}inghi conjecture.
\newblock {\em J. Combin. Theory Ser. A}, 133:280--306, 2015.

\bibitem{Lint1981}
J.~H. van Lint and A.~Schrijver.
\newblock Construction of strongly regular graphs, two-weight codes and partial
  geometries by finite fields.
\newblock {\em Combinatorica}, 1(1):63--73, 1981.

\bibitem{Witt1937}
E. Witt.
\newblock Die 5-fach transitiven {G}ruppen von {M}athieu.
\newblock {\em Abh. Math. Sem. Univ. Hamburg}, 12(1):256--264, 1937.

\end{thebibliography}

\end{document}